\documentclass[11pt]{amsart}

\usepackage[T1]{fontenc}
\usepackage[utf8]{inputenc}
\usepackage{lmodern}
\usepackage{amsmath,amssymb,amsthm,mathtools,mathrsfs}

\mathtoolsset{showonlyrefs}
\numberwithin{equation}{section}
\usepackage{booktabs,array}
\usepackage[a4paper,margin=1.15in]{geometry}
\usepackage{microtype}
\usepackage[colorlinks=true,linkcolor=blue,citecolor=blue,urlcolor=blue]{hyperref}

\newtheorem{theorem}{Theorem}[section]
\newtheorem{proposition}[theorem]{Proposition}
\newtheorem{lemma}[theorem]{Lemma}

\theoremstyle{remark}
\newtheorem{remark}[theorem]{Remark}

\newcommand{\R}{\mathbb R}
\newcommand{\C}{\mathbb C}
\newcommand{\PP}{\mathbb P}
\newcommand{\aff}{\operatorname{aff}}
\DeclareMathOperator{\ord}{ord}

\title[The finiteness conjecture for equilibria of electric fields]{The finiteness conjecture for equilibria of electric fields generated by point charges of one sign}

\author{Alberto Enciso}
\address{Instituto de Ciencias Matem\'aticas, Consejo Superior de Investigaciones Cient\'ificas, 28049 Madrid, Spain}
\email{aenciso@icmat.es}

\author{Daniel Peralta-Salas}
\address{Instituto de Ciencias Matem\'aticas, Consejo Superior de Investigaciones Cient\'ificas, 28049 Madrid, Spain}
\email{dperalta@icmat.es}

\begin{document}

\begin{abstract}
We prove that the electric field generated in three-dimensional space by
finitely many point charges of one sign has only finitely many equilibrium
points, thereby answering a 1969 question of Morse and Cairns (restated by Eremenko in 2008 and, as a conjecture, by Shapiro in 2015).  More
generally, for nonzero charges $q_i\in\R$ of possibly mixed signs at
distinct sites $\mathbf a_i\in\R^3$, we show that the Coulomb field has at
most
$
 2^{N-4}(N-1)(9N^2+9N+10)
$
equilibria in the region where
$S(\mathbf x):=\sum_iq_i|\mathbf x-\mathbf a_i|^{-3}$ does not vanish.
Of course, for charges of one
sign, $S$ is nonzero everywhere.  The proof rules out
curves of equilibria using algebraic geometry and complex analysis on an
associated complex curve, and then applies a B\'ezout count to obtain a
quantitative bound.  Well-known examples show that, in the mixed-sign case,
the Coulomb field can vanish on curves contained in the zero set of $S$.
\end{abstract}

\maketitle

\section{Introduction}\label{sec:introduction}

Let $N\ge1$, let $\mathbf a_1,\ldots,\mathbf a_N\in\R^3$ be pairwise
distinct, and let $q_1,\ldots,q_N\in\R\setminus\{0\}$.  The Coulomb
potential and the electric field generated by these point charges are, respectively,
\begin{equation}\label{eq:intro-U-E}
 U(\mathbf x):=\sum_{i=1}^N\frac{q_i}{|\mathbf x-\mathbf a_i|},
 \qquad
 E(\mathbf x):=-\nabla U(\mathbf x)
 =\sum_{i=1}^Nq_i\frac{\mathbf x-\mathbf a_i}
 {|\mathbf x-\mathbf a_i|^3}.
\end{equation}
They are defined on the set
\(
 \Omega:=\R^3\setminus\{\mathbf a_1,\ldots,\mathbf a_N\},
\)
and the electric equilibria are the critical points of the potential, i.e.,
\[
 \operatorname{Crit}(U):=
 \{\mathbf x\in\Omega:E(\mathbf x)=0\}.
\]

Our understanding of this set is still rather limited.  Already in the
nineteenth century, Maxwell conjectured~\cite{Maxwell1892} that there
should not be more than $(N-1)^2$ nondegenerate equilibria.  This was recently disproved by Arathoon, Ball and Kvalheim,
who found a five-charge configuration with at least twenty-four nondegenerate equilibrium
points~\cite{ArathoonBallKvalheim2026}.

When charges of different signs are allowed, there are elementary examples,
recalled in Appendix~\ref{sec:mixed-sign}, in which $E$ vanishes on a line
or a circle.  It has nevertheless long been conjectured that $E$ cannot
vanish on a curve (and therefore, all the critical points are isolated) when all the charges have the same sign.  To the best of
our knowledge, the question was first recorded by Morse and Cairns in
1969~\cite[Section IV.32]{MorseCairns1969} (see also the collection of unsolved problems of Eremenko, who stated the question of the same-sign finiteness in 2008~\cite{Ere}).  Shapiro later stated same-sign finiteness as
a conjecture~\cite{Shapiro2015}; Bauer, Catanese and Di Scala recorded the
Coulomb analytic-arc problem as open~\cite{BauerCataneseDiScala2017}; and
Zolotov emphasized that bounds for isolated equilibria do not exclude
nonisolated ones~\cite{Zolotov2023}.

Our objective is to prove the same-sign finiteness conjecture.  In fact,
we obtain an explicit unconditional bound for the number of equilibria,
which grows exponentially with the number of charges.

A key idea is not to impose a sign from the start.  Instead, we introduce the function
\begin{equation}\label{eq:intro-S}
 S(\mathbf x):=\sum_{i=1}^N
 \frac{q_i}{|\mathbf x-\mathbf a_i|^3}
\end{equation}
and consider the nonexceptional equilibrium set
\[
 Z:=\{\mathbf x\in\Omega:E(\mathbf x)=0,\ S(\mathbf x)\ne0\}.
\]
When all the charges have one sign, $S$ does not vanish and
$Z=\operatorname{Crit}(U)$.

\begin{theorem}\label{thm:main}
For any nonzero charges $q_1,\ldots,q_N\in\R$ and any pairwise distinct
sites $\mathbf a_1,\ldots,\mathbf a_N\in\R^3$, the set $Z$ is finite and
\begin{equation}\label{eq:main-bound}
 \#Z\le 2^{N-4}(N-1)(9N^2+9N+10).
\end{equation}
In particular, the same-sign finiteness conjecture holds.
\end{theorem}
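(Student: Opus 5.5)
We will introduce auxiliary variables $r_i=|\mathbf x-\mathbf a_i|$, so that $Z$ becomes the projection of a real algebraic set. The ambient space is $\C^{3+N}$ with coordinates $(\mathbf x,r_1,\dots,r_N)$. The defining equations are
\[
 r_i^2=|\mathbf x-\mathbf a_i|^2\quad(1\le i\le N),
 \qquad
 \sum_{i=1}^N q_i\Bigl(\prod_{j\ne i}r_j^3\Bigr)(\mathbf x-\mathbf a_i)=0 .
\]
The second line is the field equation $E=0$ cleared of denominators, and it consists of three polynomial equations. The condition $S\ne0$ becomes $\sum_i q_i\prod_{j\ne i}r_j^3\ne0$.

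It is convenient to rewrite $E=0$ as
\[
 S(\mathbf x)\,\mathbf x=\sum_i q_i r_i^{-3}\mathbf a_i .
\]
On the locus $S\ne0$ this says that $\mathbf x$ is the weighted barycenter of the sites with weights $w_i=q_ir_i^{-3}$. So $\mathbf x$ lies in $\aff\{\mathbf a_i\}$, and it is determined by the weights. This reduces the problem to the $N$ variables $w_i$ (or $r_i$), subject to the consistency relations $r_i^2=|\mathbf x(w)-\mathbf a_i|^2$.

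\textbf{Step 1: exclude curves.} Suppose $Z$ were infinite. Since it is semialgebraic after lifting, it would contain a real analytic arc. Its Zariski closure gives an irreducible complex algebraic curve $C$ in the lifted space on which all the equations hold and $S\not\equiv0$.

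Take a nonconstant meromorphic function on the normalization $\widetilde C$, and study the behaviour at a place $p$ of $\widetilde C$ at infinity, or where some $r_i\to0$ or $\infty$. There we compare the orders $\ord_p r_i$ and $\ord_p(\mathbf x-\mathbf a_i)$. The relation $r_i^2=|\mathbf x-\mathbf a_i|^2$ forces $\ord_p r_i\ge\ord_p(\mathbf x-\mathbf a_i)$, with equality unless the complex quadratic form degenerates, i.e.\ the point approaches the isotropic cone. At a place where $\mathbf x\to\infty$, all $r_i$ have the same leading order. Expanding the barycentric equation to the next orders then gives
\[
 E\sim\Bigl(\sum_i q_i\Bigr)\mathbf x/|\mathbf x|^3+\dots,
\]
and a dipole expansion forces a contradiction with $S\ne0$ generically, since $S\sim(\sum q_i)|\mathbf x|^{-3}$.

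The hard part will be the places where $\mathbf x$ tends to a finite point with $|\mathbf x-\mathbf a_i|^2\to0$ over $\C$ but $\mathbf x\ne\mathbf a_i$ (isotropic directions). I would first try to handle these with a valuation argument: pick the index achieving the minimal order of $q_ir_i^{-3}$. The barycenter identity then forces $\mathbf x-\mathbf a_{i_0}$ to vanish to higher order than $r_{i_0}$, contradicting the inequality above unless $S$ vanishes identically on $C$. We then need a global argument, for example that some function such as $U$ or $r_i$ restricted to $\widetilde C$ would have no poles, hence be constant. Being constant contradicts the arc being nontrivial, since $\nabla U=0$ along the arc makes $U$ constant there, but the $r_i$ cannot all be constant on an arc. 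I expect this complex-analytic step on $\widetilde C$ to be the main obstacle.

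\textbf{Step 2: count.} Once the real equilibria with $S\ne0$ are known to be isolated, we bound their number by a Bézout count in $\PP^{3+N}$. Each quadric $r_i^2=|\mathbf x-\mathbf a_i|^2$ contributes a factor $2$. Only four or so of the $N$ quadrics are needed after eliminating via the affine relations, which explains the factor $2^{N-4}$. The three field equations have degree about $3(N-1)+1$, and the cubic polynomial factor $(N-1)(9N^2+9N+10)$ should come from a refined Bézout count (or a mixed-volume bound) after factoring out the excess components contained in $\{S=0\}$ or in $\{r_i=0\}$. I would carry out this count with a multihomogeneous Bézout bound in $(\mathbf x)\times(r_1,\dots,r_N)$ and remove the degenerate contributions using Step 1 to guarantee they are isolated. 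Finally, sign changes $r_i\mapsto-r_i$ give the same $\mathbf x$, which is accounted for by counting only positive $r_i$.
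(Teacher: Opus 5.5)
Your overall architecture (lifting by the distances, Zariski closure and normalization, then a multihomogeneous B\'ezout count) matches the paper's, but both steps are missing the decisive idea.

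\textbf{Step~1.} You never identify a function that is pole-free on $\widetilde C$, and the candidates you name cannot work. The function $U=\sum_iq_i/r_i$ is automatically constant on $\widetilde C$: differentiate $r_i^2=(x-\mathbf a_i)\cdot(x-\mathbf a_i)$ and use the field identity. So its constancy contradicts nothing. The $r_i$ cannot be pole-free either, because a nonconstant $x$ has poles and at every pole of $x$ some $r_i$ has a pole. What is needed is the uniform estimate $\ord_p(S)\ge-3\min_j\ord_p(r_j)$ at \emph{every} $p\in\widetilde C$. Then each $r_i^3S$ is pole-free, hence a nonzero constant. So the ratios $r_i/r_j$ are constant, $S$ and $T$ are constant multiples of $r_1^{-3}$, and $x=T/S$ is constant.

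Your local analysis does not give this estimate, for three reasons.
\begin{itemize}
\item At a pole of $x$ with isotropic leading vector ($w\cdot w=0$), the $r_i$ need \emph{not} have equal orders. This is the hardest case. The paper handles it by showing that the sites $\mathbf a_j$ with $\ord_p(r_j)>\min_i\ord_p(r_i)$ lie on a real line in the direction $u\times v$, where $\Xi=u+iv$ is the isotropic limit direction, and then splitting $T_J=\mathbf a_*S_J+\mathbf dR_J$.
\item At a nonisotropic pole, $r_i=\pm\sqrt{w\cdot w}\,t^{-h}(1+O(t))$, and the branch signs may differ from one $i$ to another. So the leading term of $E=0$ only gives $\sum_i\pm q_i=0$, which kills the leading term of $S$ as well. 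No contradiction follows, and an argument that works only \emph{generically} cannot prove a statement about all configurations.
\item At a finite nonreal point $x(p)=u+iv$, your valuation argument works only if the minimal order of $q_ir_i^{-3}$ is attained at a single index. Several sites can lie on the real circle $\{\mathbf a:(x(p)-\mathbf a)\cdot(x(p)-\mathbf a)=0\}$, and their poles can cancel. Such points need not be excluded at all: the paper only shows that $S$ stays regular there, by taking the scalar product of $Sx=T$ with $v$.
\end{itemize}

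\textbf{Step~2.} The counting step, as proposed, does not lead to the stated bound. With the cleared field equations of degree $3N-2$, the total-degree B\'ezout number of your system is $2^N(3N-2)^3$. The system also has positive-dimensional excess components that cannot simply be factored out: wherever two of the $r_j$ vanish, every product $\prod_{j\ne i}r_j^3$ vanishes, so the field equations hold along a whole curve. Nor does $2^{N-4}$ come from using only four quadrics; it is just a normalization.

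The paper instead uses inverse distances $y_i=1/\rho_i$ as homogeneous coordinates on $\PP^{N-1}$, together with the point $[1:\mathbf x:\mathbf x\cdot\mathbf x]$ of the quadric $Y=\{X\cdot X=X_0X_4\}\subset\PP^4$. The system on $Y\times\PP^{N-1}$ then consists of
\begin{itemize}
\item $N-1$ equations $y_i^2L_i=y_N^2L_N$ of bidegree $(1,2)$, and
\item three equations $\sum_iq_iy_i^3(X_\alpha-a_{i\alpha}X_0)=0$ of bidegree $(1,3)$.
\end{itemize}
Its B\'ezout number is $2^{N-4}(9N^3+9N-2)$. The paper then subtracts the explicitly computed multiplicity $2^{N-1}$ at each of the $N$ charge-site zeros, and the $2^{N-1}$ zeros over $[0:0:1]$ (made simple by perturbing $q_N$ in the field equations). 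Since $2^{N-4}(9N^3+9N-2)-(N+1)2^{N-1}=2^{N-4}(N-1)(9N^2+9N+10)$, it is this subtraction that produces the factor $N-1$.

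Finally, the count requires each physical lift to be an isolated \emph{complex} zero, which real finiteness does not give. The paper obtains this by rerunning the rigidity argument on an arbitrary complex curve through the lift, after passing to a double cover to define $r_N$.
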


The bound~\eqref{eq:main-bound} requires no genericity or nondegeneracy assumptions.  For
nondegenerate configurations, Gabrielov, Novikov and Shapiro obtained
fewnomial bounds~\cite{GabrielovNovikovShapiro2007}, while Zolotov proved
an unconditional bound for isolated critical points
\cite[Theorem~2(4)]{Zolotov2023}; we observe that for same-sign configurations our bound is sharper than Zolotov's.  Edelsbrunner, Fillmore and Oliveira
obtained the estimate $2^N(3N-2)^3$ when all data except one site are fixed
and that site is chosen generically
\cite[Result~1]{EdelsbrunnerFillmoreOliveira2026}.  A recent preprint gives
an upper bound of six for three positive charges in the nondegenerate setting
\cite{GabrielovNovikovNovikovShapiro2026}.

Let us briefly describe the main idea of our proof.  Positivity of charges is difficult to use directly
to rule out a curve of equilibria, so we allow arbitrary signs and work on
the equilibrium set where $S$ is nonzero, which why we introduce the set~$Z$. The analysis of~$Z$ has two parts.  First, if there were infinitely many
nonexceptional equilibria, elementary real algebraic geometry would provide
a nonconstant real-analytic curve of such points.  We place the position
and distance functions on a compact complex curve and compare their
behavior at finite points and at infinity.  Certain meromorphic
combinations then have no poles and must be constant.  The equilibrium
identities force the position itself to be constant, which is a
contradiction.  This is carried out in
Sections~\ref{sec:semialgebraic}--\ref{sec:qualitative}.

Second, each nonexceptional equilibrium is encoded as a solution of a
compact homogeneous system.  The first part shows that the corresponding
complex solution is isolated.  A global intersection count, together with
the explicit contributions of the charge sites and the points lying over
the distinguished point at infinity, then gives the bound in
Section~\ref{sec:quantitative}.

\section{From infinitely many equilibria to a compact connected Riemann surface}
\label{sec:semialgebraic}

In this section we pass from the assumption of the existence of an infinite real equilibrium set to a
complex curve.  The idea is easy: if $Z$ were infinite, we could study
a nonconstant one-parameter family of its points using meromorphic
functions on a compact Riemann surface.  The charges may have arbitrary
signs throughout.  Obviously, the case $N=1$ has no equilibria, so we assume $N\ge2$.

For a physical point $\mathbf x\in\Omega$, set
\[
 \rho_i(\mathbf x):=|\mathbf x-\mathbf a_i|.
\]
The symbols $x=(x_1,x_2,x_3)$ and $r_1,\ldots,r_N$ introduced in
Lemma~\ref{lem:curve-reduction} will instead denote meromorphic functions
on a Riemann surface.  On the lifted real arc produced there, they agree
with the physical position and the positive distances $\rho_i$; elsewhere
they may take complex values or have poles.

With $S$ as in~\eqref{eq:intro-S}, define
\[
 T(\mathbf x):=\sum_{i=1}^N
 \frac{q_i\mathbf a_i}{\rho_i(\mathbf x)^3}.
\]
The electric field can be written as
\begin{equation}\label{eq:physical-field-decomposition}
 E(\mathbf x)
 =\sum_{i=1}^Nq_i\frac{\mathbf x-\mathbf a_i}
 {\rho_i(\mathbf x)^3}
 =S(\mathbf x)\mathbf x-T(\mathbf x).
\end{equation}
Thus, by~\eqref{eq:physical-field-decomposition}, $E(\mathbf x)=0$ is
equivalent to the balance relation
$S(\mathbf x)\mathbf x=T(\mathbf x)$.  The construction below turns this
relation along a real curve of equilibria into an identity of meromorphic
functions on a compact Riemann surface.

There is no canonical single-valued holomorphic function obtained by
replacing $\mathbf x$ with a complex variable in
$|\mathbf x-\mathbf a_i|$.  What does complexify canonically is the
polynomial relation satisfied by the distance.  We therefore introduce the
distances as additional coordinates.  For $\mathbf x\in\R^3$ and
$\widetilde\rho=(\widetilde\rho_1,\ldots,\widetilde\rho_N)\in\R^N$,
set
\[
 D(\widetilde\rho):=\prod_{i=1}^N\widetilde\rho_i^3
\]
and define the polynomial maps
\begin{align*}
 P(\mathbf x,\widetilde\rho)
 &:=\sum_{i=1}^Nq_i(\mathbf x-\mathbf a_i)
       \prod_{j\ne i}\widetilde\rho_j^3,\\
 Q(\widetilde\rho)
 &:=\sum_{i=1}^Nq_i\prod_{j\ne i}\widetilde\rho_j^3.
\end{align*}

On the graph of the physical distances, i.e., on $\widetilde\rho_i=|\mathbf x-\mathbf a_i|$, $P$ and $Q$ are the numerators of
$E$ and $S$ after clearing their common denominator.  Define
\begin{equation}\label{eq:lifted-Z}
 \widehat Z:=
 \left\{(\mathbf x,\widetilde\rho)\in\R^{3+N}:
 \begin{array}{l}
 \widetilde\rho_i>0,\quad
 \widetilde\rho_i^2=(\mathbf x-\mathbf a_i)\cdot
                    (\mathbf x-\mathbf a_i)\quad(1\le i\le N),\\
 P(\mathbf x,\widetilde\rho)=0,\quad
 Q(\widetilde\rho)\ne0
 \end{array}
 \right\}.
\end{equation}
This is a semialgebraic set.  The inequalities
$\widetilde\rho_i>0$ select the physical square roots, so
\[
 \widetilde\rho_i=\rho_i(\mathbf x)=|\mathbf x-\mathbf a_i|
\]
at every point of $\widehat Z$.  In particular,
$D(\widetilde\rho)>0$ there and
\[
 P(\mathbf x,\widetilde\rho)
 =D(\widetilde\rho)E(\mathbf x),
 \qquad
 Q(\widetilde\rho)
 =D(\widetilde\rho)S(\mathbf x).
\]
Consequently, $\widehat Z$ is the graph over $Z$ of the physical distance
map.  Projection onto the first three coordinates is a bijection
$\widehat Z\to Z$, with inverse
\[
 \mathbf x\longmapsto
 \bigl(\mathbf x,\rho_1(\mathbf x),\ldots,\rho_N(\mathbf x)\bigr).
\]
In the statement of the following lemma, $I$ denotes a real interval, which can be identified with $(0,1)$ with no loss of generality.
\begin{lemma}\label{lem:curve-reduction}
If $Z$ is infinite, there are a compact connected Riemann surface $C$,
meromorphic functions
\[
 x=(x_1,x_2,x_3),\quad r_1,\ldots,r_N
\]
on $C$, a nonconstant real-analytic arc $\mathbf x:I\to Z$, and a
real-analytic map $\xi:I\to C$ such that
\begin{equation}\label{eq:arc-dictionary}
 x(\xi(t))=\mathbf x(t),
 \qquad
 r_i(\xi(t))=|\mathbf x(t)-\mathbf a_i|>0
 \quad(t\in I).
\end{equation}
The meromorphic map $x$ is nonconstant, no $r_i$ is identically zero, and
\begin{align}
 r_i^2&=(x-\mathbf a_i)\cdot(x-\mathbf a_i),
 &&1\le i\le N,\label{eq:distance-curve}\\
 \sum_{i=1}^Nq_i\frac{x-\mathbf a_i}{r_i^3}&=0,
 \label{eq:field-curve}
\end{align}
on $C$. Moreover, $\sum_iq_i/r_i^3$ is not identically zero on $C$.
\end{lemma}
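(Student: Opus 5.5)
Since $\widehat Z$ is a semialgebraic set in bijection with $Z$ via projection, infinitude of $Z$ forces $\dim\widehat Z\ge1$.  By the curve selection lemma, or equivalently by the cell decomposition of semialgebraic sets, there is a one-dimensional cell contained in $\widehat Z$.  Hence there is a nonconstant Nash arc $\gamma(t)=(\mathbf x(t),\widetilde\rho(t))$, $t\in I=(0,1)$, with values in $\widehat Z$; it is real-analytic and semialgebraic.  Its projection $\mathbf x(t)$ is nonconstant, since $\widetilde\rho$ is determined by $\mathbf x$ on $\widehat Z$, so $\mathbf x:I\to Z$ is a nonconstant real-analytic arc.

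\emph{Complexification.}  Let $V\subset\C^{3+N}$ be the Zariski closure of $\gamma(I)$.  Since $\gamma(I)$ is a semialgebraic set of dimension one, $V$ is an irreducible algebraic curve; irreducibility holds because a connected analytic arc lies in a single irreducible component.  By the definition of $\widehat Z$, $V$ lies in the zero set of $\widetilde\rho_i^2-(\mathbf x-\mathbf a_i)\cdot(\mathbf x-\mathbf a_i)$ and of $P$.  Let $C$ be the normalization of the projective closure of $V$; it is a compact connected Riemann surface.  The coordinate functions pull back to meromorphic functions $x_1,x_2,x_3,r_1,\dots,r_N$ on $C$.  The polynomial identities \eqref{eq:distance-curve} and $P(x,r)=0$ hold on $C$.

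\emph{Lifting the arc.}  Normalization is biholomorphic over the smooth locus of $V$.  The singular locus of $V$ is finite and $\gamma$ is injective on a subinterval, since it is nonconstant and analytic, so after shrinking $I$ we may assume that $\gamma(I)$ avoids the singular points.  We then set $\xi:=\nu^{-1}\circ\gamma$, which is real-analytic and gives \eqref{eq:arc-dictionary}.  The $r_i$ and $x$ are nonconstant or nonzero where needed because on the arc $r_i(\xi(t))>0$, and $x\circ\xi=\mathbf x$ is nonconstant.  The function $Q(r)$ is a meromorphic function on $C$ that is nonzero at the points $\xi(t)$, so it is not identically zero.  Dividing $P=0$ and $Q\ne0$ by $D(r)=\prod r_i^3$, which is not identically zero, yields \eqref{eq:field-curve} and the statement that $\sum_i q_i/r_i^3\not\equiv0$.

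\emph{Main obstacle.}  The delicate step is producing the arc with its algebraic complexification.  This requires that $\gamma(I)$ have Zariski closure of dimension exactly one and be irreducible, and that the arc lift through the normalization.  I would handle this by choosing $\gamma$ to be a Nash arc; its graph is algebraic in dimension one by the standard fact that Nash functions are algebraic.  The identity principle on the irreducible curve then carries the polynomial relations from the arc to all of $C$.
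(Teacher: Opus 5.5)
Your proposal is correct and follows essentially the same route as the paper: a Nash arc in $\widehat Z$, its irreducible one-dimensional complex Zariski closure, normalization of the projective closure, a lift of the arc away from the finitely many singular points, and continuation of the polynomial identities $r_i^2=(x-\mathbf a_i)\cdot(x-\mathbf a_i)$ and $P(x,r)=0$ to $C$, followed by division by $\prod_i r_i^3$. The only difference is that you skip the paper's unique-continuation bound $\dim\widehat Z\le1$, which is harmless because any cell of positive dimension contains a Nash arc; without that bound, however, your phrase ``a one-dimensional cell contained in $\widehat Z$'' should read ``a cell of positive dimension contained in $\widehat Z$''.
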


\begin{proof}
An elementary argument of unique continuation shows that the critical set $\operatorname{Crit}(U)$ has dimension at most~1 (because $U$ is a harmonic function in $\Omega$).
Since $Z$ is infinite and $\widehat Z\to Z$ is a bijection,
$\widehat Z$ is infinite, and has dimension $\leq 1$.  It is standard that an infinite semialgebraic set has positive
dimension, so a Nash stratification of $\widehat Z$, cf.~\cite[Proposition 9.1.8]{BochnakCosteRoy1998}, taking into account the aforementioned dimensional bound, provides a nonconstant
real-analytic, connected semialgebraic arc
\[
 \gamma(t)=
 \bigl(\mathbf x(t),\widetilde\rho_1(t),\ldots,
       \widetilde\rho_N(t)\bigr)\in\widehat Z.
\]
Such an arc is usually called a Nash arc; see
\cite[Section~8]{BochnakCosteRoy1998}.  Its $\mathbf x$
component is nonconstant: if it were constant, the distance
equations would make every $\widetilde\rho_i$ constant as well.

Let $\Gamma\subset\C^{3+N}$ be the (complex) Zariski closure of $\gamma(I)$,
that is, the smallest complex algebraic set containing the arc; since $\gamma(I)$ is connected, $\Gamma$ is also connected by minimality.  Each
coordinate function of a Nash arc is algebraic over the space of rational functions with real coefficients~$\R(t)$, and hence over
$\C(t)$.  Therefore the field generated by the coordinates has
transcendence degree at most one, so $\dim_\C\Gamma\le1$.  Since
$\mathbf x(t)$ is nonconstant, $\Gamma$ has complex dimension exactly one because it is connected.
Moreover, $\Gamma$ is irreducible.  Indeed, if polynomials $F$ and $G$
satisfy $(FG)\circ\gamma=0$, then the real-analytic functions
$F\circ\gamma$ and $G\circ\gamma$ have product zero on an interval, and
one of them must vanish identically.  Thus the ideal of polynomials
vanishing on the arc is prime.

Let $\overline\Gamma\subset\PP^{3+N}$ be the projective closure of
$\Gamma$, and let
\[
 \nu:C\longrightarrow\overline\Gamma
\]
be its normalization.  Passing to $\overline\Gamma$ adds finitely many
points at infinity.  Normalization separates the local branches through
each singular point: every branch is represented by its own smooth point
of $C$, with a local parameter there.  Since $\overline\Gamma$ is an
irreducible projective curve, $C$ is a compact connected Riemann surface.

The singular points of $\Gamma$ are finite.  After shortening $I$, the arc
lies in the smooth locus, where $\nu$ is an isomorphism, and hence has a
real-analytic lift $\xi:I\to C$ satisfying $\nu(\xi(t))=\gamma(t)$.  The
affine coordinate functions on $\Gamma$ pull back to meromorphic functions
on~$C$; denote them by
\[
 (x,r)=(x_1,x_2,x_3;r_1,\ldots,r_N).
\]
By construction,
\[
 x(\xi(t))=\mathbf x(t),
 \qquad
 r_i(\xi(t))=\widetilde\rho_i(t)
 =\rho_i(\mathbf x(t))
 =|\mathbf x(t)-\mathbf a_i|>0.
\]
Thus $x$ is nonconstant and no $r_i$ is identically zero.

Only polynomial {\em equalities}\/ are continued to~$C$.  The positivity and
nonvanishing {\em inequalities}\/ in~\eqref{eq:lifted-Z}, which identify the
physical branch on the real arc, do not hold on $C$.  Every
polynomial equality satisfied on the arc vanishes on $\Gamma$ and hence,
after pullback by $\nu$, holds meromorphically on $C$.  The distance
equations give~\eqref{eq:distance-curve}.  The polynomial field equation (associated to the zeros of the electric field) gives
\[
 0=P(x,r)
 =\left(\prod_{i=1}^Nr_i^3\right)
  \sum_{i=1}^Nq_i\frac{x-\mathbf a_i}{r_i^3}.
\]
The product is not the zero function, so it can be divided out
in the field of meromorphic functions on $C$.  This proves
\eqref{eq:field-curve}.  Finally, along the lifted real arc,
\[
 \sum_{i=1}^N\frac{q_i}{r_i(p(t))^3}
 =S(\mathbf x(t))\ne0
\]
because $\mathbf x(t)\in Z$.  Hence this meromorphic function is not
identically zero on $C$.  It may, of course, have isolated zeros away from
the real arc.
\end{proof}

The construction gives the formula in~\eqref{eq:arc-dictionary};
more explicitly,
\[
 \nu(\xi(t))=\gamma(t)
 =\bigl(\mathbf x(t),\widetilde\rho_1(t),\ldots,
        \widetilde\rho_N(t)\bigr),
 \qquad
 x\circ \xi=\mathbf x,
 \qquad
 r_i\circ \xi=\widetilde\rho_i.
\]
On the lifted real arc, $x(\xi(t))$ is a point of $\R^3$ and $r_i(\xi(t))$ is
its positive distance to $\mathbf a_i$.  Away from that arc, the $x_j$ and
$r_i$ may be complex or have poles.  In particular, $r_i$ is no longer an
absolute value.  What survives on all of $C$ is the polynomial identity
\eqref{eq:distance-curve}, where the dot product is the complex bilinear
extension of the Euclidean product, not the Hermitian product.

Let $\mathcal M(C)$ denote the field of meromorphic functions on $C$.
From now on, all identities are understood in $\mathcal M(C)$, or in its
vector-valued analogue.  We use the same letters $S$ and $T$ for the
meromorphic counterparts of the physical quantities:
\[
 S:=\sum_{i=1}^N\frac{q_i}{r_i^3},
 \qquad
 T:=\sum_{i=1}^N\frac{q_i\mathbf a_i}{r_i^3}.
\]
Expanding~\eqref{eq:field-curve} gives the key identity
\begin{equation}\label{eq:meromorphic-SxT}
 Sx=T.
\end{equation}
The function $S$ is not identically zero, although it may vanish at
isolated points.  Thus division by $S$ is legitimate in $\mathcal M(C)$,
but not as a pointwise operation at one of its zeros.

Differentiating~\eqref{eq:distance-curve} gives
\[
 r_i\,dr_i=(x-\mathbf a_i)\cdot dx.
\]
Using~\eqref{eq:field-curve}, we obtain
\[
 d\!\left(\sum_{i=1}^N\frac{q_i}{r_i}\right)
 =-\sum_{i=1}^Nq_i\frac{(x-\mathbf a_i)\cdot dx}{r_i^3}=0.
\]
A meromorphic function with zero differential on a connected Riemann
surface is constant.  Thus, for some $c\in\C$,
\begin{equation}\label{eq:constant-potential}
 \sum_{i=1}^N\frac{q_i}{r_i}=c
\end{equation}
on $C$.

Finally, let
\[
 A:=\aff_{\R}\{\mathbf a_1,\ldots,\mathbf a_N\}
   =\mathbf a_1+V,
 \qquad
 V_{\C}:=V\otimes_{\R}\C,
 \qquad
 A_{\C}:=\mathbf a_1+V_{\C}.
\]
Thus~$A$ is the affine space defined by the sites, $V$~is the corresponding real vector space, and $V_\C$ is its complexification. For any normal vector $\eta\in V^\perp$,
\[
 \eta\cdot(T-\mathbf a_1S)
 =\sum_{i=1}^N
   \frac{q_i\,\eta\cdot(\mathbf a_i-\mathbf a_1)}{r_i^3}=0.
\]
Together with~\eqref{eq:meromorphic-SxT}, this gives
$S\,\eta\cdot(x-\mathbf a_1)=0$.  Since $S$ is not identically zero,
division in $\mathcal M(C)$ yields
\begin{equation}\label{eq:x-in-affine-span}
 \eta\cdot(x-\mathbf a_1)=0
 \qquad\text{for every }\eta\in V^\perp.
\end{equation}
These are the affine linear equations defining $A_{\C}$.  Thus, wherever
$x$ is finite, it lies in $A_{\C}$, and the defining identities hold
meromorphically on all of $C$. Note that, in the typical case where~$V$ has dimension three, $V^\perp =\{0\}$ and~\eqref{eq:x-in-affine-span} is trivial.

\section{Finite points and points at infinity}
\label{sec:compactification}

At a point $p\in C$, the meromorphic position map $x$ may have a finite
value or a pole.  To treat both possibilities in one space, we use the
paraboloid embedding
\[
 z\longmapsto[1:z:z\cdot z].
\]
Its projective closure adds the possible limits of $x$ at its poles.  The
main advantage is that every complexified squared-distance polynomial
$(z-\mathbf a)\cdot(z-\mathbf a)$ becomes the restriction of a linear
form.

Let $A=\mathbf a_1+V$ and $A_{\C}=\mathbf a_1+V_{\C}$ as above.  For
$z\in A_{\C}$, set
\[
 \iota(z):=[1:z:z\cdot z]\in\PP^4,
\]
where the homogeneous coordinates are written as $[X_0:X:X_4]$, with
$X=(X_1,X_2,X_3)$.  Define the quadric
\begin{equation}\label{eq:YA}
 Y_A:=\left\{[X_0:X:X_4]\in\PP^4:
 \begin{array}{l}
 X\cdot X=X_0X_4,\\
 \eta\cdot(X-\mathbf a_1X_0)=0
 \quad\text{for every }\eta\in V^\perp
 \end{array}
 \right\}.
\end{equation}
On the chart $X_0\ne0$, division by $X_0$ gives
\[
 z:=X/X_0\in A_{\C},
 \qquad
 X_4/X_0=z\cdot z,
\]
so
\[
 Y_A\cap\{X_0\ne0\}=\iota(A_{\C}).
\]
Writing
\[
 W:=X-\mathbf a_1X_0\in V_{\C},
 \qquad
 \widetilde X_4:=X_4-|\mathbf a_1|^2X_0-2\mathbf a_1\cdot W,
\]
the quadric equation becomes $W\cdot W=X_0\widetilde X_4$.  The quadratic
form $W\cdot W-X_0\widetilde X_4$ is nondegenerate on
$V_{\C}\oplus\C^2$.  Hence $Y_A$ is a smooth projective quadric of (complex)
dimension $\dim V$, and it is precisely the projective closure of
$\iota(A_{\C})$.

For each index~$i\in\{1,\dots, N\}$, introduce the linear form
\[
 L_i(X_0,X,X_4)
 :=X_4-2\mathbf a_i\cdot X+|\mathbf a_i|^2X_0,
\]
so that
\[
 L_i(1,z,z\cdot z)
 =(z-\mathbf a_i)\cdot(z-\mathbf a_i).
\]

\begin{lemma}\label{lem:no-common-distance-zero}
Assume that $N\geq 2$.  Then
$L_1,\ldots,L_N$ do not vanish simultaneously at
any point of $Y_A$.
\end{lemma}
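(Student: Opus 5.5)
Since $N\ge 2$, it suffices to show that $L_1$ and $L_2$ have no common zero on $Y_A$; the remaining forms play no role. The key observation is that the difference of two of these forms is affine-linear in $X$ and $X_0$ alone:
\[
 L_i-L_j=-2(\mathbf a_i-\mathbf a_j)\cdot X+(|\mathbf a_i|^2-|\mathbf a_j|^2)X_0 .
\]
I would therefore split into the chart $X_0\ne0$ and the hyperplane at infinity $X_0=0$.

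\emph{Affine chart.} On $X_0\ne0$ the point is $\iota(z)$ with $z\in A_\C$, and a common zero would give $(z-\mathbf a_1)\cdot(z-\mathbf a_1)=(z-\mathbf a_2)\cdot(z-\mathbf a_2)=0$. Because the bilinear form is complex, isotropic vectors exist and this is not immediately contradictory. The constraint $z\in A_\C$ is what rules it out. Write $w=z-\mathbf a_1\in V_\C$ and $u=\mathbf a_2-\mathbf a_1\in V\setminus\{0\}$ (real and nonzero since the sites are distinct). The two equations read $w\cdot w=0$ and $(w-u)\cdot(w-u)=0$, whence $2u\cdot w=u\cdot u$. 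Now decompose $w=w_R+iw_I$ with $w_R,w_I\in V$. The conditions become
\[
 |w_R|^2=|w_I|^2,\quad w_R\cdot w_I=0,\quad 2u\cdot w_R=|u|^2,\quad u\cdot w_I=0 .
\]
Expanding the second equation in real and imaginary parts gives $|w_R-u|^2=|w_I|^2$ and $(w_R-u)\cdot w_I=0$. Hence $w_R$ and $w_R-u$ are both real vectors of the same length $|w_I|$, each orthogonal to $w_I$. This is consistent whenever $\dim V\ge2$ (for instance with $w_I$ orthogonal to the plane), so the affine chart is \emph{not} automatically excluded. I expect this to be the main obstacle: for $\dim V\ge2$ such complex points genuinely exist. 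One example is the circle-type complex points $\mathbf a_1+w$ with $w=(u/2)+\,$stuff.

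\emph{Reconsideration.} This suggests that the statement must hold only because $Y_A$ is the quadric built from $V$ and, as constructed, it does contain such points. So I would recheck the claim numerically in the simplest case $\dim V=1$, $N=2$. There $w=tu$ with $t\in\C$, $w\cdot w=t^2|u|^2=0$ forces $t=0$, and then $(w-u)\cdot(w-u)=|u|^2\ne0$, so no common zero exists. For $\dim V\ge2$, however, I would test whether points such as $w=u/2+ s e$ (with $e\perp u$ real, $s$ complex, $s^2|e|^2=-|u|^2/4$) fail. They satisfy both equations, and since $w\in V_\C$ they do lie in $A_\C$. My plan would therefore be to verify carefully whether the lemma as stated needs all $N$ forms, with the remaining sites imposing independent conditions, rather than just two. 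I would attempt a general argument: the common zero set of $L_1,\dots,L_N$ on $Y_A$ is cut by the linear conditions $L_i-L_1=0$, which force $X$ (projectively, together with $X_0$) into a space of codimension $\dim V$ in the $(X_0,W)$ coordinates, since the $\mathbf a_i-\mathbf a_1$ span $V$. Combined with $L_1=0$ and the quadric equation, I would show that only the point $[0:0:1]$ (with $W=0$, $X_0=0$) could survive, and that it has $L_1=X_4\ne0$. Concretely, the spanning condition gives $W=\lambda\,$(a fixed vector)$\cdot X_0$. At infinity, $X_0=0$ forces $W=0$, hence $\widetilde X_4\ne0$ and $L_1\ne0$. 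At finite points $z$ is determined uniquely as the real circumcenter-type solution of the linear system, which is real, so $L_1(z)=|z-\mathbf a_1|^2$ vanishes only if $z=\mathbf a_1$, contradicting $L_2(z)=|\mathbf a_1-\mathbf a_2|^2\ne0$. This "all $N$ forms plus spanning" argument is the proof I would write.
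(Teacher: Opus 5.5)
Your final ``all $N$ forms plus spanning'' argument is correct and is essentially the paper's proof. At infinity, the differences $L_j-L_1=-2(\mathbf a_j-\mathbf a_1)\cdot W+|\mathbf a_j-\mathbf a_1|^2X_0$, the spanning of $V$ and the nondegeneracy of the bilinear product on $V_\C$ force $W=0$, hence $L_1=\widetilde X_4\ne0$. In the affine chart, the same real linear system has at most one solution in $V_\C$, and that solution must be real because its conjugate also solves the system; the paper reaches the same point by taking imaginary parts to get $\mathbf b_j\cdot v=0$. Then $w\cdot w=|w|^2=0$ gives the contradiction.

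Your opening reduction to $L_1,L_2$ alone is false when $\dim V\ge2$, as your own isotropic example $w=u/2+se$ shows. That paragraph should be discarded rather than presented as the first step.
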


\begin{proof}
Suppose first that a common zero lies in the chart $X_0\ne0$.  It can be
written as $\iota(z_0)$, with
\[
 z_0=\mathbf a_1+w,
 \qquad
 w\in V_{\C}.
\]
Set $\mathbf b_j:=\mathbf a_j-\mathbf a_1$.  The equation for $j=1$ gives
$w\cdot w=0$, while the remaining equations give
\[
 2\mathbf b_j\cdot w=|\mathbf b_j|^2
 \qquad(1\le j\le N).
\]
Write $w=u+iv$ with $u,v\in V$.  Taking imaginary parts shows that
$\mathbf b_j\cdot v=0$ for every $j$.  Since the vectors $\mathbf b_j$
span $V$, this forces $v=0$.  Hence $w\in V$, and $w\cdot w=|w|^2=0$
gives $w=0$, contradicting the equation above for any nonzero site
difference.

Now suppose that a common zero lies at infinity, say
$[0:\Xi:\Theta]\in Y_A$.  Then $\Xi\in V_{\C}$, and the equations
$L_j=0$ imply
\[
 \Theta=2\mathbf a_j\cdot\Xi
 \qquad(1\le j\le N).
\]
Subtracting the equation with $j=1$ gives
$(\mathbf a_j-\mathbf a_1)\cdot\Xi=0$ for every $j$.  The site
differences span $V$, and the Euclidean product is nondegenerate on
$V_{\C}$, so $\Xi=0$.  The preceding equations then give $\Theta=0$,
which is impossible in the projective space.
\end{proof}

The function $\Phi:=[1:x:x\cdot x]$ also has a well-defined projective limit
at a pole of $x$.  Near any $p\in C$, its entries are meromorphic;
multiplying by a suitable power of a local coordinate makes them
holomorphic and not all zero at $p$.  Their projective class is independent
of this rescaling and defines a holomorphic map $C\to\PP^4$.  On the dense
open set where $x$ is finite, the map takes values in $Y_A$ by the quadric identity and~\eqref{eq:x-in-affine-span}; since $Y_A$ is closed, its extension does so
everywhere.  We therefore obtain a holomorphic map
\[
 \Phi:C\longrightarrow Y_A.
\]

Fix $p\in C$.  On a neighborhood of $p$, choose holomorphic homogeneous
coordinates
\[
 H=(X_0,X,X_4)
\]
representing $\Phi$, with no common zero.  Any two such choices differ by
a holomorphic unit, so the orders below do not depend on the choice.
Where $x$ is finite, the definition $\Phi=[1:x:x\cdot x]$ gives
\[
 X=X_0x,
 \qquad
 X_4=X_0x\cdot x.
\]
These are meromorphic identities near $p$.  Combining them with
\eqref{eq:distance-curve} yields
\begin{equation}\label{eq:distance-form-identity}
 L_i(H)=X_0r_i^2,
 \qquad 1\le i\le N,
\end{equation}
which holds on all $C$.
Let $\ord_p(f)$ denote the order of a nonzero meromorphic function $f$ at
$p$: it is positive at a zero, negative at a pole, and zero for a local
unit.  Set $\ord_p(0)=+\infty$.  Define
\[
 m_i:=\ord_p(r_i),
 \qquad
 m:=\min_i m_i,
\]
and split the indices as
\[
 I:=\{i:m_i=m\},
 \qquad
 J:=\{i:m_i>m\}.
\]

\begin{proposition}\label{prop:four-cases}
With the notation above,
\begin{equation}\label{eq:order-formulas}
 \ord_p(X_0)=-2m,
 \qquad
 \ord_p\bigl(L_i(H)\bigr)=2(m_i-m).
\end{equation}
Consequently,
\[
 i\in I\iff L_i(\Phi(p))\ne0,
 \qquad
 i\in J\iff L_i(\Phi(p))=0.
\]
Moreover, exactly one of the following four cases occurs:
\begin{enumerate}
\item \emph{Finite real point.}  $X_0(p)\ne0$, $m=0$, and
\[
 \Phi(p)=[1:\mathbf x_0:\mathbf x_0\cdot\mathbf x_0]
\]
for some $\mathbf x_0\in A$.
\item \emph{Finite nonreal point.}  $X_0(p)\ne0$, $m=0$, and
\[
 \Phi(p)=[1:z_0:z_0\cdot z_0]
\]
for some $z_0\in A_{\C}\setminus A$.
\item \emph{The distinguished point at infinity.}  $X_0(p)=0$, $m<0$,
and $\Phi(p)=[0:0:1]$.
\item \emph{Any other point at infinity.}  $X_0(p)=0$, $m<0$, and
\[
 \Phi(p)=[0:\Xi:\Theta],
 \qquad
 \Xi\ne0,
 \qquad
 \Xi\cdot\Xi=0.
\]
\end{enumerate}
In either infinite case, at least one distance function $r_i$ has a pole
at $p$.
\end{proposition}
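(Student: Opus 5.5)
The plan is to derive everything from the identity \eqref{eq:distance-form-identity}, $L_i(H)=X_0r_i^2$, and from Lemma~\ref{lem:no-common-distance-zero}. Since $H$ has no common zero at $p$ and $\Phi(p)\in Y_A$, Lemma~\ref{lem:no-common-distance-zero} gives at least one index $i_0$ with $L_{i_0}(H)(p)\ne0$, that is, $\ord_p L_{i_0}(H)=0$. Every $L_i(H)$ is holomorphic near $p$, so $\ord_p L_i(H)\ge0$ for all $i$. Taking orders in \eqref{eq:distance-form-identity} gives
\[
 \ord_p L_i(H)=\ord_p X_0+2m_i\ge0
\]
for every $i$, with equality for $i_0$. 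Hence $\ord_p X_0=-2m_{i_0}$ and $m_{i_0}\le m_i$ for all $i$, so $m_{i_0}=m$. This proves both formulas in \eqref{eq:order-formulas}: $\ord_p X_0=-2m$ and $\ord_p L_i(H)=2(m_i-m)$. The characterization of $I$ and $J$ follows at once, because $L_i(\Phi(p))\ne0$ exactly when $\ord_pL_i(H)=0$, that is, when $m_i=m$. In particular $m\le0$, and $X_0(p)\neq0$ holds iff $m=0$.

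For the case split, suppose first that $X_0(p)\ne0$. Then $m=0$ and $\Phi(p)=[1:z_0:z_0\cdot z_0]$ with $z_0=x(p)\in A_\C$ by \eqref{eq:x-in-affine-span}. Cases (1) and (2) are distinguished according to whether $z_0\in A$ or not. Suppose instead that $X_0(p)=0$. Then $m<0$, so some $r_i$ (any $i\in I$) has a pole, which gives the final assertion. Moreover $\Phi(p)=[0:\Xi:\Theta]$ satisfies the quadric equation $\Xi\cdot\Xi=X_0X_4=0$. Cases (3) and (4) are distinguished by whether $\Xi=0$. If $\Xi=0$, then $\Theta\neq0$ and the point is $[0:0:1]$. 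The four cases are mutually exclusive by construction.

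No step is a real obstacle. The only point needing care is the justification that the orders behave as claimed when some $r_i$ could be identically zero or $X_0$ could vanish identically. Lemma~\ref{lem:curve-reduction} excludes the first. For the second, $X_0$ is not identically zero because $x$ is finite on a dense set; alternatively, $X_0\equiv0$ would force $L_i(H)\equiv0$ for all $i$, contradicting Lemma~\ref{lem:no-common-distance-zero}.
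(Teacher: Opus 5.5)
Your proof is correct and follows the paper's argument essentially verbatim. You take orders in $L_i(H)=X_0r_i^2$ and use Lemma~\ref{lem:no-common-distance-zero} to get an index of order zero; the paper phrases this as $\min_i\ord_p(L_i(H))=0$. You then split according to whether $X_0(p)$ and $\Xi$ vanish, exactly as the paper does. Your extra remark that neither $X_0$ nor any $r_i$ is identically zero is a welcome justification that the paper leaves implicit.
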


\begin{proof}
Each $L_i(H)$ is holomorphic near $p$.  By
Lemma~\ref{lem:no-common-distance-zero}, at least one is nonzero at $p$;
hence
\[
 \min_i\ord_p\bigl(L_i(H)\bigr)=0.
\]
Taking orders in~\eqref{eq:distance-form-identity} gives
\[
 \ord_p\bigl(L_i(H)\bigr)
 =\ord_p(X_0)+2m_i.
\]
Taking the minimum over $i$ proves $\ord_p(X_0)=-2m$, and substitution
gives the second formula in~\eqref{eq:order-formulas}.  The description of
$I$ and $J$ follows immediately.

If $X_0(p)\ne0$, then $\ord_p(X_0)=0$, so $m=0$.  The quotient
$x=X/X_0$ is holomorphic at $p$; write its value as $z_0\in A_{\C}$.
Then
\[
 \Phi(p)=[1:z_0:z_0\cdot z_0].
\]
If $z_0\in A$, denote it by $\mathbf x_0$ and obtain Case~1; otherwise
we obtain Case~2.

If $X_0(p)=0$, then $\ord_p(X_0)>0$, so $m<0$.  Write
$\Phi(p)=[0:\Xi:\Theta]$.  The quadric equation gives
$\Xi\cdot\Xi=0$.  If $\Xi=0$, then $\Theta\ne0$ and the point is
$[0:0:1]$ after rescaling.  Otherwise we obtain Case~4.  Finally, $m<0$
means that some $m_i$ is negative, so at least one $r_i$ has a pole.
\end{proof}

\section{Controlling poles and forcing the position map to be constant}
\label{sec:qualitative}

At each point $p\in C$,
Proposition~\ref{prop:four-cases} gives exactly four possibilities: a
finite real point, a finite nonreal point, the distinguished point at
infinity, or another point at infinity.  In this section we first treat these four cases 
and prove the same local estimate in each of them, and then show that this local estimate forces the function~$x$ to be constant. This implies that the set~$Z$ must be finite.

The estimate concerns
\[
 S=\sum_{i=1}^N\frac{q_i}{r_i^3}.
\]
If $m:=\min_i\ord_p(r_i)$, we will prove
\[
 \ord_p(S)\ge-3m.
\]
When $m<0$, the distance functions of smallest order have poles, and the
estimate says that $S$ vanishes to order at least $-3m$.  It will imply that
every product $r_i^3S$ has no poles anywhere on $C$.

Choose a local coordinate $t$ at $p$, with $t(p)=0$.  For a
vector-valued meromorphic function $w=(w_1,w_2,w_3)$, set
\[
 \ord_p(w):=\min_{1\le\ell\le3}\ord_p(w_\ell).
\]

\begin{lemma}\label{lem:nonisotropic-leading}
Suppose that $x$ has a pole at $p$ and can be written as
\[
 x=t^{-h}(w+O(t)),
 \qquad
 h>0,
 \qquad
 w\in\C^3\setminus\{0\}.
\]
If $w\cdot w\ne0$, then $\ord_p(r_i)=-h$ for every $i$.
\end{lemma}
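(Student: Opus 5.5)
The plan is to compute the order of $r_i^2$ directly from the distance identity~\eqref{eq:distance-curve}, $r_i^2=(x-\mathbf a_i)\cdot(x-\mathbf a_i)$, which holds in $\mathcal M(C)$. Near $p$ we write $x-\mathbf a_i=t^{-h}(w+O(t))$. Since $h>0$, the constant vector $\mathbf a_i$ is absorbed into the $O(t)$ term after factoring out $t^{-h}$: indeed $\mathbf a_i=t^{-h}\cdot t^{h}\mathbf a_i$, and $t^h\mathbf a_i=O(t)$. Hence $x-\mathbf a_i$ has the same leading coefficient $w$ as $x$ for every $i$.

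Next we expand the bilinear product. This gives
\[
 r_i^2=t^{-2h}\bigl(w\cdot w+O(t)\bigr),
\]
where $O(t)$ collects the cross terms $2w\cdot O(t)$ and $O(t)\cdot O(t)$, all holomorphic and vanishing at $t=0$. Because $w\cdot w\ne0$, the bracket is a holomorphic unit near $p$. Therefore $\ord_p(r_i^2)=-2h$.

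Finally, $r_i$ is not identically zero by Lemma~\ref{lem:curve-reduction}, so $\ord_p(r_i)$ is a finite integer with $2\ord_p(r_i)=\ord_p(r_i^2)=-2h$. This gives $\ord_p(r_i)=-h$ for every $i$.

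There is no genuine obstacle; the argument is a one-line leading-order computation. The only points to be careful about are two. First, the dot product is the complex bilinear one, so the nonisotropy hypothesis $w\cdot w\ne0$ is exactly what prevents cancellation of the leading term; in the isotropic case the order could be larger, which is why Case~4 of Proposition~\ref{prop:four-cases} needs separate treatment. Second, we must confirm that the hypothesis $h>0$ is used to ensure the sites do not affect the leading coefficient, so that the conclusion is uniform in $i$.
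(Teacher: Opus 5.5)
Your proposal is correct and matches the paper's proof: use $h>0$ to absorb $\mathbf a_i$ into the $O(t)$ term, expand $r_i^2=(x-\mathbf a_i)\cdot(x-\mathbf a_i)=t^{-2h}(w\cdot w+O(t))$, and conclude $2\ord_p(r_i)=-2h$ from $w\cdot w\ne0$. One small reference correction: the isotropic case is handled separately in Case~4 of the proof of Proposition~\ref{prop:pole-estimate}, while Proposition~\ref{prop:four-cases} only classifies the points.
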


\begin{proof}
For each fixed site $\mathbf a_i$,
\[
 x-\mathbf a_i=t^{-h}(w+O(t)).
\]
Hence
\[
 r_i^2=(x-\mathbf a_i)\cdot(x-\mathbf a_i)
 =t^{-2h}\bigl(w\cdot w+O(t)\bigr).
\]
Since $w\cdot w\ne0$, the right-hand side has order $-2h$.  Therefore
$2\ord_p(r_i)=-2h$.
\end{proof}

\begin{proposition}\label{prop:pole-estimate}
For every $p\in C$,
\[
 \ord_p(S)\ge-3\min_i\ord_p(r_i).
\]
\end{proposition}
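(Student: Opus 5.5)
The plan is to reduce the estimate to the contribution of the ``bad'' indices $J$ and then kill that contribution with the global identities already proved on $C$. Write $m_i=\ord_p(r_i)$ and $m=\min_i m_i$ as in Proposition~\ref{prop:four-cases}. Each summand satisfies $\ord_p(q_i/r_i^3)=-3m_i$. For $i\in I$ this equals $-3m$, which is acceptable. For $i\in J$ it is strictly smaller than $-3m$. So the whole difficulty is to show that the $J$-terms either do not occur or cancel to the required order. By Proposition~\ref{prop:four-cases}, $J=\{i: L_i(\Phi(p))=0\}$, so I will go through the four cases of that proposition.

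\emph{Case 3 (distinguished point at infinity).} Here $\Phi(p)=[0:0:1]$, so $L_i(\Phi(p))=1\ne0$ for every $i$. Hence $J=\emptyset$, every term has order exactly $-3m$, and the estimate is immediate.

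\emph{Case 1 (finite real point).} Here $m=0$, and $L_i(\Phi(p))=|\mathbf x_0-\mathbf a_i|^2$ vanishes only if $\mathbf x_0=\mathbf a_i$. Since the sites are distinct, $|J|\le1$. If $J=\{j\}$, I use the constant-potential identity~\eqref{eq:constant-potential}:
\[
 \frac{q_j}{r_j}=c-\sum_{i\ne j}\frac{q_i}{r_i}.
\]
The right-hand side is holomorphic at $p$, because $m_i=0$ for $i\ne j$. The left-hand side has a pole, since $m_j>0$ and $q_j\ne0$. This is a contradiction, so $J=\emptyset$ and again the estimate is immediate.

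\emph{Cases 2 and 4 (finite nonreal point; other points at infinity).} These are the main obstacle, since several $L_i$ can vanish simultaneously at a nonreal point of $Y_A$. My main tool will be an identity obtained by dotting~\eqref{eq:field-curve} with $x-\mathbf a_j$. Using
\[
 2(x-\mathbf a_i)\cdot(x-\mathbf a_j)=r_i^2+r_j^2-d_{ij}^2,
 \qquad d_{ij}:=|\mathbf a_i-\mathbf a_j|,
\]
together with~\eqref{eq:constant-potential}, this yields, for every $j$,
\[
 r_j^2\,S=\sum_{i\ne j}\frac{q_i\,d_{ij}^2}{r_i^3}-c.
\]
The crucial feature is that the $i=j$ term drops out.

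I then argue by contradiction. Suppose $\ord_p(S)<-3m$, let $M:=\max_i m_i>m$, and let $J_M$ be the set of indices attaining $M$. Taking orders in the displayed identity for $j\in J_M$ gives a lower bound for the right-hand side in terms of the second-largest order $M'$ of the $r_i$, or in terms of $-3M$ if $|J_M|\ge2$. Comparing with $\ord_p(S)$ on the left should force a cancellation among the leading coefficients of $q_i/r_i^3$, $i\in J_M$. I would then iterate this down the filtration by orders. If this bookkeeping does not close, I would supplement it with the vector identity $Sx=T$, in the form $S(x-\mathbf a_j)=T-\mathbf a_jS$, together with the leading-order expansion of $x$. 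For Case 4, the expansion comes from Lemma~\ref{lem:nonisotropic-leading}, applied with the isotropic direction $\Xi$ isolating which $L_i$ vanish. These constraints should produce a linear relation among the site differences $\mathbf a_i-\mathbf a_j$, $i,j\in J$, that is incompatible with the nondegeneracy used in Lemma~\ref{lem:no-common-distance-zero}.

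I expect this order comparison in Cases 2 and 4 to be the delicate step of the proof.
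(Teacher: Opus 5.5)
Your Cases~1 and~3 are correct and coincide with the paper's arguments. The gap is that Cases~2 and~4, which carry all the content of the proposition, are not proved. What you give there is a plan (``should force'', ``if this bookkeeping does not close, I would supplement it''), and the plan as stated does not close.

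Your identity $r_j^2S=\sum_{i\ne j}q_id_{ij}^2/r_i^3-c$ is correct, and it does dispose of Case~2 when $J$ is a singleton. But once $|J_M|\ge2$, the other indices of $J_M$ put terms of order $-3M$ on the right-hand side. So in Case~2, where $m=0<M$, the identity only gives $\ord_p(S)\ge-5M$, which is weaker than the trivial bound $-3M$. Its real content is that, for $j\in J_M$, the weighted sums $\sum_{i\ne j}q_id_{ij}^2/r_i^3$ have order at least $-M$ although individual terms have order $-3M$. You offer no mechanism for turning these $j$-dependent weighted cancellations into the unweighted one you need, $\ord_p(S_J)\ge0$. ``Iterating down the filtration'' brings in lower Taylor coefficients of the $r_i$ that nothing controls.

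The endpoint you anticipate is also not available. The conditions $L_j(\Phi(p))=0$, $j\in J$, only place the $J$-sites on a real locus. In Case~2, with $x(p)=u+iv$, this is the circle $\{|\mathbf a-u|=|v|,\ (\mathbf a-u)\cdot v=0\}$. In Case~4 it is a real line parallel to $\operatorname{Re}\Xi\times\operatorname{Im}\Xi$. Both are perfectly compatible with Lemma~\ref{lem:no-common-distance-zero}, since $I\ne\varnothing$ always. No contradiction can be extracted from the sites alone; it must come from the local behaviour of $x=T/S$ at $p$. Finally, Lemma~\ref{lem:nonisotropic-leading} cannot be applied ``with the isotropic direction $\Xi$''. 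Its hypothesis is a \emph{non}-isotropic leading direction, whereas at a Case~4 point the leading direction of $x$ is precisely the isotropic vector $\Xi$.

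What is missing are the two arguments the paper actually uses.

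\emph{Case~2.} Taking imaginary parts in $(z_0-\mathbf a_j)\cdot(z_0-\mathbf a_j)=0$ shows that all $J$-sites satisfy the single real linear equation $v\cdot\mathbf a_j=u\cdot v$. The point $x(p)=z_0$ violates it, since $v\cdot z_0-u\cdot v=i|v|^2\ne0$. Hence $v\cdot T_J=(u\cdot v)S_J$, and dotting $Sx=T$ with $v$ gives $(v\cdot x-u\cdot v)S_J=-v\cdot(xS_I-T_I)$. The left coefficient is a unit and the right-hand side is regular, so $S_J$ is regular. Since $v\in V$, this is in fact a combination of your scalar identities with weights summing to zero, but the proposal never identifies it.

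\emph{Case~4.} Write $\Xi=u+iv$. The relations $L_j(\Phi(p))=0$ make the $J$-sites collinear: $\mathbf a_j=\mathbf a_*+\tau_j\mathbf d$ with $\mathbf d=u\times v$ real and $\mathbf d\cdot\mathbf d>0$. Hence $T_J=\mathbf a_*S_J+\mathbf dR_J$ with $R_J=\sum_{j\in J}\tau_jq_j/r_j^3$. Suppose $\min(\ord_pS_J,\ord_pR_J)<-3m$. There are two possibilities.
\begin{itemize}
\item If $\ord_pS_J\le\ord_pR_J$, then $\ord_pS=\ord_pS_J\le\ord_pT$. So $x=T/S$ is regular at $p$, contradicting the pole of $x$.
\item If $\ord_pR_J<\ord_pS_J$, then $\mathbf dR_J$ dominates $T$ and $x$ acquires the nonisotropic leading direction $\mathbf d$. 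Lemma~\ref{lem:nonisotropic-leading} then forces all $m_i$ to be equal, i.e.\ $J=\varnothing$, a contradiction.
\end{itemize}

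Without these two steps, or substitutes for them, the proposition is not established.
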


\begin{proof}
Fix $p\in C$ and retain the notation of
Proposition~\ref{prop:four-cases}:
\[
 m_i:=\ord_p(r_i),
 \qquad
 m:=\min_i m_i,
 \qquad
 I:=\{i:m_i=m\},
 \qquad
 J:=\{i:m_i>m\}.
\]
Set $\mu:=-3m$.  For any subset $B\subset\{1,\ldots,N\}$, write
\[
 S_B:=\sum_{i\in B}\frac{q_i}{r_i^3},
 \qquad
 T_B:=\sum_{i\in B}\frac{q_i\mathbf a_i}{r_i^3}.
\]
Every summand of $S_I$ has order $\mu$, and every component of every
summand of $T_I$ has order at least $\mu$.  Thus
\begin{equation}\label{eq:SI-TI-orders}
 \ord_p(S_I)\ge\mu,
 \qquad
 \ord_p(T_I)\ge\mu.
\end{equation}
Possible cancellation only increases these orders.

\subsubsection*{Case 1: a finite real point.}
Here $m=0$, so $\mu=0$, and $x$ is regular at $p$ with value
$\mathbf x_0\in A\subset\R^3$.  If $j\in J$, then
\[
 0=L_j(\Phi(p))
  =(\mathbf x_0-\mathbf a_j)\cdot
    (\mathbf x_0-\mathbf a_j)
  =|\mathbf x_0-\mathbf a_j|^2,
\]
so $\mathbf x_0=\mathbf a_j$.  Since the sites are distinct, $J$ contains
at most one index.

Suppose that $J=\{j\}$.  Then $m_j>0$, so $q_j/r_j$ has a pole at $p$.
For every $i\in I$, one has $m_i=0$, so $q_i/r_i$ is regular.  Thus the
single pole $q_j/r_j$ cannot cancel in the constant-potential identity
\eqref{eq:constant-potential}, a contradiction.  Hence $J$ is empty.
Therefore $S=S_I$ is regular and $\ord_p(S)\ge0=\mu$.

\subsubsection*{Case 2: a finite nonreal point.}
Again $m=0$ and $\mu=0$, but now
\[
 x(p)=z_0=u+iv,
 \qquad
 u,v\in\R^3,
 \qquad
 v\ne0.
\]
For $j\in J$, the equality $L_j(\Phi(p))=0$ becomes
$(z_0-\mathbf a_j)\cdot(z_0-\mathbf a_j)=0$.  Its imaginary part gives
$(\mathbf a_j-u)\cdot v=0$.  Consequently,
\[
 T_J\cdot v=(u\cdot v)S_J.
\]
Taking the scalar product with $v$ in the field identity
$x(S_I+S_J)=T_I+T_J$, cf. Equation~\eqref{eq:meromorphic-SxT}, gives
\begin{equation}\label{eq:finite-nonreal-SJ}
 (v\cdot x-u\cdot v)S_J=-v\cdot(xS_I-T_I).
\end{equation}
The coefficient on the left is holomorphic and nonzero at $p$, because
its value there is
\[
 v\cdot z_0-u\cdot v=i|v|^2\ne0.
\]
It is therefore invertible near $p$.  The right-hand side of
\eqref{eq:finite-nonreal-SJ} is regular by
\eqref{eq:SI-TI-orders}, since $x$ is regular at $p$.  Hence $S_J$ is
regular.  Both $S_I$ and $S_J$ are therefore regular, so
$\ord_p(S)\ge0=\mu$.

\subsubsection*{Case 3: the distinguished point at infinity.}
Here $\Phi(p)=[0:0:1]$.  For every $i$,
$L_i(0,0,1)=1$.  Proposition~\ref{prop:four-cases} therefore
gives $J=\varnothing$.  Thus $S=S_I$, and
\eqref{eq:SI-TI-orders} yields $\ord_p(S)\ge\mu$.

\subsubsection*{Case 4: any other point at infinity.}
Write
\begin{equation}\label{eq:nonvertex-infinity}
 \Phi(p)=[0:\Xi:\Theta],
 \qquad
 \Xi\ne0,
 \qquad
 \Xi\cdot\Xi=0.
\end{equation}
Let $\Xi=u+iv$ with $u,v\in\R^3$.  The equation $\Xi\cdot\Xi=0$ says
\begin{equation}\label{eq:isotropic-real-imag}
 |u|^2=|v|^2>0,
 \qquad
 u\cdot v=0.
\end{equation}
By~\eqref{eq:isotropic-real-imag}, $u$ and $v$ are linearly independent, and
\[
 \mathbf d:=u\times v
\]
is a nonzero real vector with
$\mathbf d\cdot\mathbf d=|\mathbf d|^2>0$.

If $J$ is empty, \eqref{eq:SI-TI-orders} already proves the desired
estimate.  Assume therefore that $J\ne\varnothing$.  For $j,j'\in J$, the
relations
$L_j(\Phi(p))=L_{j'}(\Phi(p))=0$ (cf. Equation~\eqref{eq:order-formulas}) give
\[
 (\mathbf a_j-\mathbf a_{j'})\cdot u=0,
 \qquad
 (\mathbf a_j-\mathbf a_{j'})\cdot v=0.
\]
Thus every difference $\mathbf a_j-\mathbf a_{j'}$ is parallel to
$\mathbf d$.  The sites with indices in $J$ lie on a real affine line, so
we may write
\begin{equation}\label{eq:J-line}
 \mathbf a_j=\mathbf a_*+\tau_j\mathbf d,
 \qquad
 \tau_j\in\R,
 \qquad
 j\in J, \qquad \mathbf a_*\in \mathbb R^3.
\end{equation}
If $J=\{j\}$, then $\tau_j=0$ and the next argument also applies. With the parametrization~\eqref{eq:J-line}, set
\[
 R_J:=\sum_{j\in J}\frac{\tau_jq_j}{r_j^3}.
\]
Then
\begin{equation}\label{eq:TJ-decomposition}
 T_J=\mathbf a_*S_J+\mathbf dR_J.
\end{equation}
Let
\[
 \sigma:=\ord_p(S_J),
 \qquad
 \varrho:=\ord_p(R_J),
\]
with $\ord_p(0)=+\infty$.  We claim that
\begin{equation}\label{eq:sigma-rho-bound}
 \sigma\ge\mu,
 \qquad
 \varrho\ge\mu.
\end{equation}
Suppose otherwise that either $\sigma$ or $\varrho$ are $<\mu$.  There are two possible orderings.

If $\sigma\le\varrho$, then necessarily $\sigma<\mu$.  Since
$S=S_I+S_J$ and $\ord_p(S_I)\ge\mu$, we have $\ord_p(S)=\sigma$.
Moreover, \eqref{eq:SI-TI-orders} and
\eqref{eq:TJ-decomposition} give $\ord_p(T)\ge\sigma$ (of course, $T=T_I+T_J$).  The identity
$x=T/S$ in $\mathcal M(C)$ then shows that every component of $x$ is
regular at $p$.  This contradicts~\eqref{eq:nonvertex-infinity}.

It remains to consider $\varrho<\sigma$.  In this case
$\varrho<\mu$.  In the decomposition
\[
 T=T_I+\mathbf a_*S_J+\mathbf dR_J,
\]
the last term is the unique term of order $\varrho$.  Hence $T$ has
leading direction $\mathbf d$.  On the other hand, both $S_I$ and $S_J$
have order strictly greater than $\varrho$, so, because $S$ is not
identically zero,
\[
 \delta:=\ord_p(S)>\varrho.
\]
Next, let us consider a local coordinate $t$ and an adapted Laurent expansion (recall that $C$ is a compact Riemann surface). Writing the leading terms of $T$ and $S$ in the local coordinate $t$, and using that $x=T/S$, we obtain
\[
 x=t^{-h}(\alpha\mathbf d+O(t)),
 \qquad
 h:=\delta-\varrho>0,
 \qquad
 \alpha\in\C^*.
\]
Since $\mathbf d\cdot\mathbf d>0$,
Lemma~\ref{lem:nonisotropic-leading} gives $m_i=-h$ for every $i$.  All
$m_i$ are therefore equal, so $J$ is empty, a contradiction.

Both alternatives lead to contradictions, so
\eqref{eq:sigma-rho-bound} holds.  Combining it with
\eqref{eq:SI-TI-orders} gives
\[
 \ord_p(S)=\ord_p(S_I+S_J)\ge\mu=-3m.
\]
This completes the fourth case and the proof.
\end{proof}

\begin{remark}\label{rem:no-dimension-reduction}
Proposition~\ref{prop:pole-estimate} is formulated in the ambient space
$\R^3$ and already includes collinear and coplanar configurations; no
separate reduction to their affine span is needed.
\end{remark}

Finally, we are ready to show the finiteness part of Theorem~\ref{thm:main}.

\begin{proposition}\label{prop:global-rigidity}
For arbitrary nonzero charges at pairwise distinct sites, the set $Z$ is
finite.
\end{proposition}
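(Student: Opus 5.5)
We argue by contradiction, assuming $Z$ is infinite, and use Lemma~\ref{lem:curve-reduction} to obtain the compact connected Riemann surface $C$ with meromorphic $x$, $r_1,\ldots,r_N$ satisfying \eqref{eq:distance-curve}, \eqref{eq:field-curve}, $S\not\equiv0$, and $x$ nonconstant. The strategy is to produce enough meromorphic functions without poles on $C$, which must then be constant, and to show that these constants force $x$ to be constant.

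\textbf{Step 1: the products $r_i^3S$ are constant.} Fix $i$ and $p\in C$. By Proposition~\ref{prop:pole-estimate},
\[
 \ord_p(r_i^3S)=3m_i+\ord_p(S)\ge 3m_i-3m\ge0,
\]
since $m_i\ge m=\min_j\ord_p(r_j)$. Thus $r_i^3S$ is holomorphic on the compact connected surface $C$, and hence equal to a constant $\kappa_i$. We have $\kappa_i\neq0$, because $S\not\equiv0$ and $r_i\not\equiv0$.

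\textbf{Step 2: $x$ is constant.} From $r_i^3=\kappa_i/S$ we get $r_i^3/r_1^3=\kappa_i/\kappa_1$, so every ratio $r_i/r_1$ is a meromorphic function whose cube is constant. Hence $r_i=\lambda_i r_1$ with $\lambda_i\in\C^*$ constant. Now take the differences of the distance identities \eqref{eq:distance-curve}:
\[
 (\lambda_i^2-1)r_1^2=r_i^2-r_1^2=-2(\mathbf a_i-\mathbf a_1)\cdot x+|\mathbf a_i|^2-|\mathbf a_1|^2 .
\]
We also use the key identity \eqref{eq:meromorphic-SxT}, which reads $x=T/S$. Since $1/r_i^3=S/\kappa_i$, we get
\[
 T=\Bigl(\sum_i q_i\mathbf a_i/\kappa_i\Bigr)S,
\]
so $x=\sum_iq_i\mathbf a_i/\kappa_i$ is a constant vector. (Consistency check: $S=\sum_iq_i/r_i^3=S\sum_iq_i/\kappa_i$ gives $\sum_iq_i/\kappa_i=1$.) This is the decisive step, and it is immediate once Step 1 is in place. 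It contradicts the nonconstancy of $x$ asserted in Lemma~\ref{lem:curve-reduction}, so $Z$ is finite.

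\textbf{Where the work lies.} The main obstacle has already been absorbed into Proposition~\ref{prop:pole-estimate}. The only point to verify here is that the inequality $\ord_p(S)\ge-3m$ is strong enough to control \emph{each} $r_i^3S$, not merely those with $i\in I$. This holds because $m_i\ge m$ for all $i$. The order inequality must be read with the convention $\ord_p(0)=+\infty$, which causes no trouble since $S\not\equiv0$.
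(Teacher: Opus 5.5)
Your proof is correct and follows essentially the same route as the paper: the pole estimate makes each $r_i^3S$ a nonzero constant $\kappa_i$, and the identity $Sx=T$ then forces $x$ to be constant, contradicting Lemma~\ref{lem:curve-reduction}. Substituting $1/r_i^3=S/\kappa_i$ directly into $T$ is a slight shortcut compared with the paper, which first extracts cube roots to show each $r_i$ is a constant multiple of $r_1$; the ratio step and the distance-difference identity in your Step~2 are therefore unused and can be dropped (and note that $Z$ infinite already forces $N\ge2$, which the lemmas you invoke require).
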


\begin{proof}
For $N=1$, the set is empty.  Suppose now that $N\ge2$ and that $Z$ is
infinite.  Lemma~\ref{lem:curve-reduction} produces a compact connected
Riemann surface $C$ and meromorphic functions $x$ and $r_i$ satisfying
\eqref{eq:distance-curve}, \eqref{eq:field-curve}, and
\eqref{eq:constant-potential}, with $x$ nonconstant and $S$ not identically
zero. 

At any
point of $C$, Proposition~\ref{prop:pole-estimate} gives
\[
 \ord_p(S)\ge-3\min_j\ord_p(r_j)
 \qquad(p\in C).
\]
For each $i$, define $K_i:=r_i^3S$.  At any $p\in C$,
\[
 \ord_p(K_i)
 =3\ord_p(r_i)+\ord_p(S)
 \ge3\ord_p(r_i)-3\min_j\ord_p(r_j)
 \ge0.
\]
Thus $K_i$ has no pole anywhere on $C$.  A meromorphic function without
poles on a compact connected Riemann surface is holomorphic and therefore
constant.  Neither $r_i$ nor $S$ is identically zero, so every $K_i$ is a
nonzero constant.

It follows that
\[
 \left(\frac{r_i}{r_j}\right)^3
 =\frac{K_i}{K_j}\in\C\backslash\{0\}
 \qquad(1\le i,j\le N).
\]
Since $C$ is connected, a meromorphic function whose cube is a fixed
nonzero constant is itself constant.  Thus
\[
 r_i=\kappa_i r_1,
 \qquad
 \kappa_i\in\C\backslash\{0\}.
\]
Consequently,
\[
 S=r_1^{-3}\alpha,
 \qquad
 T=r_1^{-3}\beta,
\]
where
\[
 \alpha:=\sum_{i=1}^Nq_i\kappa_i^{-3}\in\C,
 \qquad
 \beta:=\sum_{i=1}^Nq_i\kappa_i^{-3}\mathbf a_i\in\C^3.
\]
Because $S$ is not identically zero, $\alpha\ne0$.  The field identity
therefore yields $x=\beta/\alpha$, so $x$ is constant, which is a contradiction. The claim then follows.
\end{proof}

\section{The quantitative bound}
\label{sec:quantitative}

For the rest of the paper assume $N\ge2$; the case $N=1$ is trivial.  We encode each equilibrium
as a zero of a compact homogeneous system.  The ideas in
Section~\ref{sec:qualitative} allow us to show that the corresponding complex zero is
isolated.  We then compare with a generic system of the same bidegrees and
subtract the explicit contributions of the nonphysical zeros
at the charge sites and the distinguished point at infinity.

We use the ambient quadric
\[
 Y:=\{[X_0:X:X_4]\in\PP^4:X\cdot X=X_0X_4\}.
\]
It is obtained from $Y_A$ by omitting the affine-span equations in
\eqref{eq:YA}; in particular, it is smooth and has complex dimension three.
If the sites lie in a proper affine subspace, the system may have
additional nonphysical zeros.  They do not affect the argument, which uses
only the isolated zeros whose local contributions are explicitly controlled.
The distance forms from Section~\ref{sec:compactification} are
\[
 L_i(X_0,X,X_4)
 =X_4-2\mathbf a_i\cdot X+|\mathbf a_i|^2X_0,
 \qquad 1\le i\le N.
\]
On the affine chart $X_0=1$, this gives
$L_i=(x-\mathbf a_i)\cdot(x-\mathbf a_i)$.

Let $y=[y_1:\cdots:y_N]\in\PP^{N-1}$ and set
\[
 M:=Y\times\PP^{N-1}.
\]
Write $\mathbf a_i=(a_{i1},a_{i2},a_{i3})$.  On $M$ consider the
$N-1+3$ homogeneous equations
\begin{align}
 D_i&:=y_i^2L_i-y_N^2L_N=0,
 &&1\le i<N,\label{eq:projective-D}\\
 F_\alpha&:=\sum_{i=1}^Nq_i y_i^3
 (X_\alpha-a_{i\alpha}X_0)=0,
 &&1\le\alpha\le3.\label{eq:projective-F}
\end{align}
The $D_i$ have bidegree $(1,2)$ and the $F_\alpha$ have bidegree $(1,3)$.
Moreover,
\[
 \dim M=3+N-1=N+2,
\]
which is exactly the number of equations.

Let $\mathbf x\in Z$ and put
$\rho_i:=|\mathbf x-\mathbf a_i|$.  We define its physical lift as
\[
 \lambda(\mathbf x):=
 \left([1:\mathbf x:\mathbf x\cdot\mathbf x],
 [\rho_1^{-1}:\cdots:\rho_N^{-1}]\right)\in M.
\]
At this point $y_i^2L_i=\rho_i^{-2}\rho_i^2=1$, so all the $D_i$ vanish,
and the equations $F_\alpha=0$ are precisely the three components of
$E(\mathbf x)=0$.  The map $\lambda:Z\to M$ is injective because its first
component recovers $\mathbf x$.

For the B\'ezout count, real finiteness is not enough: a physical lift could
still lie on a complex curve of zeros.  The next proposition rules this out
using Proposition~\ref{prop:global-rigidity}.

\begin{proposition}\label{prop:complex-isolation}
For every $\mathbf x^0\in Z$, the point $\lambda(\mathbf x^0)$ is an
isolated complex solution of~\eqref{eq:projective-D}--\eqref{eq:projective-F}.
\end{proposition}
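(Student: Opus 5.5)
Argue by contradiction. Suppose $\lambda(\mathbf x^0)$ is not isolated in the complex zero set of \eqref{eq:projective-D}--\eqref{eq:projective-F} on $M$. That zero set is a projective algebraic set, so some irreducible component of positive dimension passes through $\lambda(\mathbf x^0)$. Slicing this component by generic hyperplanes through the point, we obtain an irreducible projective curve $\Gamma\subset M$ through $\lambda(\mathbf x^0)$ on which all the $D_i$ and $F_\alpha$ vanish. Let $\widetilde C\to\Gamma$ be its normalization, a compact connected Riemann surface.

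The plan is to reproduce on (a cover of) $\widetilde C$ exactly the data used in Section~\ref{sec:qualitative}, and then invoke the argument of Proposition~\ref{prop:global-rigidity} verbatim. At $\lambda(\mathbf x^0)$ we have $X_0\ne0$, every $y_i\ne0$ and every $L_i\ne0$, so none of $X_0,y_i,L_i$ vanishes identically on $\Gamma$. We therefore obtain meromorphic functions
\[
 x:=X/X_0,\qquad u_i:=y_i/y_N,\qquad \ell_i:=(x-\mathbf a_i)\cdot(x-\mathbf a_i)=L_i/X_0 .
\]
The equations $D_i=0$ give $u_i^2\ell_i=\ell_N$. The function $\ell_N$ need not have a meromorphic square root on $\widetilde C$, so we pass to a connected component $C$ of the normalization of the double cover defined by $s^2=\ell_N$. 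This is again a compact connected Riemann surface. On $C$ we set
\[
 r_N:=s,\qquad r_i:=s/u_i .
\]
Then $r_i^2=\ell_i$, which is \eqref{eq:distance-curve}. Dividing $F_\alpha=0$ by $X_0y_N^3r_N^3$ gives
\[
 \sum_i q_iu_i^3(x-\mathbf a_i)/r_N^3=0,
\]
that is, \eqref{eq:field-curve}. Moreover $S=r_N^{-3}\sum_i q_iu_i^3$. At any lift of $\lambda(\mathbf x^0)$, $\sum_i q_iu_i^3=\rho_N^3S(\mathbf x^0)\ne0$, so $S\not\equiv0$.

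Next we check that $x$ is nonconstant. If $x$ were constant on $C$, it would equal $\mathbf x^0$. Then each $\ell_i$ would be the nonzero constant $\rho_i^2$, so each $u_i^2$ would be constant, and each $u_i$ would be constant on the connected surface. In that case $\Gamma$ would reduce to the single point $\lambda(\mathbf x^0)$ (note that $[1:x:x\cdot x]$ is determined by $x$), contradicting $\dim\Gamma=1$.

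We now have exactly the hypotheses that the rest of Section~\ref{sec:semialgebraic} and Sections~\ref{sec:compactification}--\ref{sec:qualitative} use: a compact connected Riemann surface, nonconstant meromorphic $x$, meromorphic $r_i\not\equiv0$ satisfying \eqref{eq:distance-curve} and \eqref{eq:field-curve}, and $S\not\equiv0$. Consequently \eqref{eq:constant-potential}, \eqref{eq:x-in-affine-span}, the map $\Phi:C\to Y_A$, Proposition~\ref{prop:four-cases} and Proposition~\ref{prop:pole-estimate} all hold on $C$. The argument of Proposition~\ref{prop:global-rigidity} then shows that each $K_i=r_i^3S$ is a nonzero constant. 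Hence the ratios $r_i/r_1$ are constant, and $x=\beta/\alpha$ is constant, a contradiction.

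The main obstacle is to confirm that Section~\ref{sec:qualitative} never used the real arc or the positivity of the $r_i$, beyond the single fact $S\not\equiv0$. Case~1 only uses that $x(p)$ is real and the constant-potential identity; Cases~2--4 are purely algebraic. So the only point needing genuine care is the square-root double cover, together with the check that restricting to a connected component keeps every identity intact, which is automatic since all of them are pulled back from $\widetilde C$.
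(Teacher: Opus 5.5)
Your proposal is correct and matches the paper's proof essentially step for step. You use the same curve through $\lambda(\mathbf x^0)$ and its normalization, the same possible double cover to produce $r_N$, the same definition $r_i=r_N/u_i$, the same check that $S\not\equiv0$ at a lift of $\lambda(\mathbf x^0)$, and then the rigidity argument of Proposition~\ref{prop:global-rigidity}. The differences are cosmetic: you show that constancy of $x$ would collapse $\Gamma$ before invoking rigidity, whereas the paper does so after, and you take a connected component of the double cover instead of splitting into the square and non-square cases.
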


\begin{proof}
Suppose that $p:=\lambda(\mathbf x^0)$ were not isolated.  Some algebraic
component of the zero set through $p$ would have positive dimension.
Choose an irreducible projective curve $\Gamma\subset M$ through $p$ in
that component, and let
\[
 \nu_0:C_0\longrightarrow\Gamma
\]
be its normalization.  Since $\Gamma$ is projective, $C_0$ is a compact
connected Riemann surface.  Near $p$ we are in the chart $X_0y_N\ne0$, so
write
\[
 x=X/X_0,
 \qquad
 z_i=y_i/y_N\quad(i<N),
 \qquad
 z_N=1.
\]
These coordinate functions pull back to meromorphic functions on $C_0$.
Choose $p_0\in C_0$ with $\nu_0(p_0)=p$.  Since $z_i(p_0)\ne0$, none of
the $z_i$ is the zero meromorphic function.

Restricting the equations $D_i=0$ to the curve and pulling them back to
$C_0$ gives
\begin{equation}\label{eq:isolation-distance-ratios}
 z_i^2(x-\mathbf a_i)\cdot(x-\mathbf a_i)
 =(x-\mathbf a_N)\cdot(x-\mathbf a_N)
 \qquad(i<N).
\end{equation}
Set
\[
 f_N:=(x-\mathbf a_N)\cdot(x-\mathbf a_N).
\]
If $f_N$ has a meromorphic square root on $C_0$, choose one, call it
$r_N$, set $C:=C_0$, and let $\pi:C\to C_0$ be the identity.  Otherwise,
consider the two-sheeted cover of $C_0$ defined by
\[
 w^2=f_N,
\]
and let $C$ be its smooth compact model.  The projection
$\pi:C\to C_0$ is finite, and $w$ defines a meromorphic function on $C$;
set $r_N:=w$.  Since $f_N$ is not a square in $\mathcal M(C_0)$, the cover
is connected.  Pull back $x,z_1,\ldots,z_{N-1}$ to $C$ without changing
notation.  In the second case,
$f_N(p_0)=|\mathbf x^0-\mathbf a_N|^2>0$ shows that $\pi$ is unramified
over $p_0$.  Choose $\widetilde p_0\in C$ above $p_0$ and, after changing
the sign of $r_N$ if necessary, arrange that
\[
 r_N(\widetilde p_0)=|\mathbf x^0-\mathbf a_N|.
\]
Set
\[
 r_i:=r_N/z_i
 \qquad(i<N).
\]
No $r_i$ is identically zero, and
\begin{equation}\label{eq:isolation-distance-identities}
 r_i^2=(x-\mathbf a_i)\cdot(x-\mathbf a_i)
 \qquad(1\le i\le N).
\end{equation}

Restricting the equations $F_\alpha=0$ to the same dense chart gives
\[
 \sum_{i=1}^Nq_i z_i^3(x-\mathbf a_i)=0
\]
as a meromorphic identity on $C$.  Since $z_i=r_N/r_i$, division by
$r_N^3$ yields
\begin{equation}\label{eq:isolation-field-identity}
 \sum_{i=1}^Nq_i\frac{x-\mathbf a_i}{r_i^3}=0.
\end{equation}
We must also verify that $S:=\sum_iq_i/r_i^3$ is not the zero function.
Put $\Sigma:=\sum_iq_i z_i^3$.  At $\widetilde p_0$,
\[
 \Sigma(\widetilde p_0)
 =|\mathbf x^0-\mathbf a_N|^3S(\mathbf x^0)\ne0,
\]
and hence $S=r_N^{-3}\Sigma$ is not identically zero.

Finally, differentiating~\eqref{eq:isolation-distance-identities} gives
$r_i\,dr_i=(x-\mathbf a_i)\cdot dx$.  Equation
\eqref{eq:isolation-field-identity} then yields
\[
 d\!\left(\sum_{i=1}^N\frac{q_i}{r_i}\right)
 =-\sum_{i=1}^Nq_i\frac{(x-\mathbf a_i)\cdot dx}{r_i^3}=0.
\]
Thus the sum is constant on $C$.  This way, we have obtained a compact connected
Riemann surface $C$ and meromorphic functions $x$ and $r_i$ satisfying the Equations
\eqref{eq:distance-curve}, \eqref{eq:field-curve}, and
\eqref{eq:constant-potential} on $C$, with $S$ and $r_i$ not identically zero. It is then clear that we can apply the same proof as in
Proposition~\ref{prop:global-rigidity} to infer that the meromorphic function $x$ is constant.
Since $x(\widetilde p_0)=\mathbf x^0$, we have $x\equiv\mathbf x^0$, and
\eqref{eq:isolation-distance-ratios} gives
\[
 z_i^2=
 \frac{(\mathbf x^0-\mathbf a_N)\cdot
       (\mathbf x^0-\mathbf a_N)}
      {(\mathbf x^0-\mathbf a_i)\cdot
       (\mathbf x^0-\mathbf a_i)}
 \in\C\backslash\{0\}.
\]
Since $C$ is connected, every $z_i$ is constant.  The map
$\nu_0\circ\pi:C\to\Gamma$ is finite and surjective, but all of
its affine coordinate functions are constant on a dense open subset.  It
is therefore constant, contradicting that $\Gamma$ is a curve, and the proof is complete.
\end{proof}

We compare our system with a generic system of the same bidegrees.  The
required global count is the following.

\begin{lemma}\label{lem:global-bezout}
A generic system on $M=Y\times\PP^{N-1}$ consisting of $N-1$ equations of
bidegree $(1,2)$ and three equations of bidegree $(1,3)$ has
\begin{equation}\label{eq:TN}
 \alpha_N:=2\sum_{k=0}^{\min\{3,N-1\}}
 3^k2^{N-1-k}\binom{3}{k}\binom{N-1}{k}
 =2^{N-4}(9N^3+9N-2)
\end{equation}
simple zeros.  Moreover, under a sufficiently small perturbation, the
total local multiplicity inside any fixed isolating neighborhood is
unchanged.
\end{lemma}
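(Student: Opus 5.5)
The count is an intersection number on $M=Y\times\PP^{N-1}$. Let $H$ denote the pullback of the hyperplane class of $\PP^4$ restricted to $Y$, and let $h$ denote the hyperplane class of $\PP^{N-1}$. Since $Y$ is a smooth quadric threefold, $H^3=\deg Y=2$, $H^4=0$ on $Y$, and $h^{N-1}=1$, $h^N=0$. The line bundles in question are $\mathcal O(1,2)$ and $\mathcal O(1,3)$, with first Chern classes $H+2h$ and $H+3h$.

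\emph{Step 1: the number.} The number of zeros of a generic system is
\[
 \int_M (H+2h)^{N-1}(H+3h)^3 .
\]
To compute it, I extract the coefficient of $H^3h^{N-1}$ and multiply by $2$. Suppose $k$ factors $3h$ are taken from $(H+3h)^3$; this contributes $\binom3k3^kH^{3-k}h^k$. The remaining $H^k$ must then come from $(H+2h)^{N-1}$, contributing $\binom{N-1}{k}2^{N-1-k}H^kh^{N-1-k}$. Summing over $0\le k\le\min\{3,N-1\}$ gives exactly the sum in \eqref{eq:TN}. The closed form is then routine: expand $\binom{N-1}{k}$ for $k\le3$, factor out $2^{N-4}$, and collect terms. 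As a check, for $N=2$ both sides equal $22$. For $N=2,3$ the terms with $k>N-1$ vanish automatically, so the polynomial identity holds for all $N\ge2$.

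\emph{Step 2: genericity and simplicity.} Both $\mathcal O(1,2)$ and $\mathcal O(1,3)$ are very ample on $M$, since they are restrictions of Segre–Veronese embeddings. Hence their spaces of sections are base-point free. By Bertini/Kleiman transversality, applied inductively to general sections $s_1,\dots,s_{N+2}$, the common zero locus is zero-dimensional and reduced, with every zero transverse. For such a transverse complete intersection, the number of points equals the top Chern class integral from Step 1. This gives $\alpha_N$ simple zeros.

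\emph{Step 3: stability of local multiplicity.} Take a fixed isolating neighborhood $U$ of an isolated zero (or of a finite set of zeros) of a system $s=(s_1,\dots,s_{N+2})$. In local trivializations, $s$ is a holomorphic map $\C^{N+2}\supset U\to\C^{N+2}$ with no zeros on $\partial U$. Its total local multiplicity is the topological degree of $s/|s|$ on $\partial U$; for holomorphic maps this coincides with the algebraic intersection multiplicity. A sufficiently small perturbation $s'$ satisfies $|s'-s|<\min_{\partial U}|s|$, so by a Rouché/homotopy argument $s'$ has no zeros on $\partial U$ and has the same degree. Equivalently, one can invoke conservation of number (Fulton, Intersection Theory, Prop.~10.2).

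I expect the main obstacle to be only bookkeeping. One point needs care: the closed form must be checked for small $N$, where $\min\{3,N-1\}<3$. The genuine content of the argument is standard transversality together with the conservation of number.
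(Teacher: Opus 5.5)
Your proposal is correct and follows essentially the same route as the paper: base-point-freeness plus Bertini applied inductively to get a reduced zero-dimensional intersection, then the intersection number $2[u^3v^{N-1}](u+2v)^{N-1}(u+3v)^3$ on $Y\times\PP^{N-1}$ with $\deg Y=2$ and the same coefficient extraction, then conservation of number for the stability statement. The differences are cosmetic: you get base-point-freeness from very ampleness of the Segre--Veronese bundles rather than from explicit products of a linear form and a degree-$k$ form, and you phrase stability as a Rouch\'e/local-degree argument, which is valid once $U$ lies in a trivializing chart or is a union of such small balls and is equivalent to the paper's conservation-of-number citation.
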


\begin{proof}
We first show that a generic system has only simple zeros.  For each \(k\in\{2,3\}\), we claim that the family consisting of all equations of bidegree
\((1,k)\) has no common zero on \(M\).  Indeed, at any point
\((p,[y])\in M\), one can choose a linear form in the coordinates of
\(\PP^4\) that is nonzero at \(p\), and a degree-\(k\) homogeneous
polynomial in the coordinates of \(\PP^{N-1}\) that is nonzero at
\([y]\).  Their product is then an equation of bidegree \((1,k)\) that does
not vanish at \((p,[y])\). This is precisely the base-point-free condition required
by Bertini's theorem~\cite[Corollary~III.10.9]{Hartshorne1977}.

We can therefore apply Bertini's theorem one equation at a time.  Starting from the smooth
variety \(M\), a generic first equation defines a smooth hypersurface.
Restricting the remaining families to this hypersurface, a generic
second equation meets it transversely.  Repeating this argument for all
the equations, their generic common zero set is smooth of the expected
dimension. Accordingly, a generic tuple of all the equations cuts
\(M\) transversely.  Since
\[
 \dim M=\dim Y+\dim\PP^{N-1}=3+(N-1)=N+2
\]
and the system contains
\[
 (N-1)+3=N+2
\]
equations, its common zero set is zero-dimensional and reduced.  Thus it
is finite and all its zeros are simple.  Denote this zero set by
\(Z_{\mathrm{gen}}\).

We now count these zeros.  The multihomogeneous B\'ezout rule used here
is the direct analogue of the classical one.  In the classical B\'ezout
theorem on a single projective space, a hypersurface of degree \(d\)
contributes \(d h\), where \(h\) denotes a hyperplane class (a topological condition in algebraic geometry, see below), and
multiplying these contributions gives the product of the degrees.  Here
there are two projective factors, so the degrees in the two factors must
be recorded separately.

Accordingly, let \(u\) and \(v\) denote hyperplane classes in the
first and second factors.  More precisely,
\[
 u=c_1\bigl(\pi_1^*\mathcal O_Y(1)\bigr),
 \qquad
 v=c_1\bigl(\pi_2^*\mathcal O_{\PP^{N-1}}(1)\bigr),
\]
where \(c_1\) denotes the first Chern class, and $\pi_1:M\to Y$, $\pi_2:M\to \mathbb P^{N-1}$ are the canonical projectors; in this setting, it is simply
the divisor class of a hyperplane section.  An equation of bidegree
\((a,b)\) has class
\[
 au+bv.
\]
Thus each of the \(N-1\) equations of bidegree \((1,2)\) contributes
\(u+2v\), and each of the three equations of bidegree \((1,3)\)
contributes \(u+3v\).

Expanding the product of all the contributions
\[
 (u+2v)^{N-1}(u+3v)^3
\]
amounts to distributing the intersection conditions between the two
projective factors.  To obtain isolated points, exactly three
hyperplane conditions must be imposed on the three-dimensional variety
\(Y\), and exactly \(N-1\) on \(\PP^{N-1}\).  Hence only the coefficient
of \(u^3v^{N-1}\) contributes to the count.  Moreover, three general
hyperplanes cut the quadric \(Y\subset\PP^4\) in two points, while
\(N-1\) general hyperplanes in \(\PP^{N-1}\) have one common point.
The multihomogeneous B\'ezout theorem
\cite[Example~8.4.2]{Fulton1998} therefore gives
\[
 \#Z_{\mathrm{gen}}
 =
 2[u^3v^{N-1}](u+2v)^{N-1}(u+3v)^3,
\]
where \([u^av^b]P\) denotes the coefficient of \(u^av^b\) in \(P\).
This is the analogue, for two groups of homogeneous variables, of
multiplying the degrees in the classical B\'ezout theorem.

It remains to extract this coefficient.  Let \(k\) be the number of
times that the term \(3v\) is selected from the three factors
\(u+3v\).  This contributes
\[
 3^k\binom{3}{k}u^{3-k}v^k.
\]
To obtain \(u^3v^{N-1}\), the other factor must then contribute
\(u^kv^{N-1-k}\).  This term is
\[
 2^{N-1-k}\binom{N-1}{k}u^kv^{N-1-k}.
\]
Consequently,
\[
 \#Z_{\mathrm{gen}}
 =
 2\sum_{k=0}^{\min\{3,N-1\}}
 3^k2^{N-1-k}\binom{3}{k}\binom{N-1}{k}.
\]
Using the convention \(\binom{N-1}{k}=0\) for \(k>N-1\), this can be
written as
\begin{align*}
 \#Z_{\mathrm{gen}}
 &=2^{N-4}\bigl(
 16+72(N-1)+54(N-1)(N-2)\\
 &\hspace{35mm}
 +9(N-1)(N-2)(N-3)
 \bigr)\\
 &=2^{N-4}(9N^3+9N-2)
 =\alpha_N,
\end{align*}
which is Equation~\eqref{eq:TN} in the statement of the lemma.

Finally, consider an arbitrary system of the same bidegrees and an
isolating neighborhood \(U\), so that the system has only isolated zeros
in \(U\) and no zero on \(\partial U\).  Since the system does not vanish
on \(\partial U\), every sufficiently small perturbation is also
nonvanishing there, and no zero can enter or leave \(U\).  The
conservation-of-number principle states that the sum of the local
multiplicities of the zeros in \(U\) is therefore unchanged.  For a
generic perturbation all these zeros are simple, so this sum is exactly
the number of perturbed zeros lying in \(U\).

The perturbation may be chosen simultaneously generic and sufficiently
small for any fixed finite collection of pairwise disjoint isolating
neighborhoods.  Since it has exactly \(\alpha_N\) zeros on all of \(M\),
the sum of the local multiplicities contained in those neighborhoods is
at most \(\alpha_N\); see \cite[Section~10.2]{Fulton1998}.
\end{proof}

Furthermore, the system of equations~\eqref{eq:projective-D} and~\eqref{eq:projective-F} on $M$ has two families of zeros that do not represent
physical equilibria.  The first lies over the charge sites, which are
excluded from the physical domain.  For $1\le j\le N$, let $[e_j]$
denote the $j$th
coordinate point of $\PP^{N-1}$ and set
\[
 p_j:=\bigl([1:\mathbf a_j:|\mathbf a_j|^2],[e_j]\bigr)\in M.
\]
At $p_j$ every field equation vanishes, and so does every product
$y_i^2L_i$; hence $p_j$ is a zero of the system.

\begin{proposition}\label{prop:site-multiplicity}
Each $p_j$ is an isolated zero and has local multiplicity $2^{N-1}$.
\end{proposition}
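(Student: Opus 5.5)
We will compute the multiplicity in explicit local coordinates at $p_j$. First we eliminate the position variables by the implicit function theorem. The remaining $N-1$ equations in $N-1$ variables have leading forms $w_i^2$, which forces both isolation and multiplicity $2^{N-1}$.

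\emph{Local chart.} Near $p_j$ we work in the chart $X_0=1$, $y_j=1$. On $Y$ the coordinate $X_4=X\cdot X$ is determined by $X$, so we write $X=\mathbf a_j+s$ with $s\in\C^3$ small, and $y_i=w_i$ for $i\ne j$, with $w_i$ small. These $3+(N-1)=N+2$ coordinates are local holomorphic coordinates on $M$ centred at $p_j$. In them,
\[
 L_j=s\cdot s,
 \qquad
 L_i=|\mathbf a_i-\mathbf a_j|^2+O(s)=:c_i(s)\quad(i\ne j),
\]
where each $c_i$ is a unit near $0$ because the sites are distinct.

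\emph{Replacing the $D_i$.} The $\C$-span of the $D_i=y_i^2L_i-y_N^2L_N$ equals the span of the differences $y_i^2L_i-y_j^2L_j$ for $i\ne j$: both are the space of zero-sum combinations of the $y_k^2L_k$. So the local ideal at $p_j$ is generated by
\[
 w_i^2c_i(s)-s\cdot s\quad(i\ne j),
 \qquad
 q_js+\sum_{i\ne j}q_iw_i^3(\mathbf a_j-\mathbf a_i+s).
\]

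\emph{Eliminating $s$.} Since $q_j\ne0$, the field equations have invertible linear part in $s$. The implicit function theorem solves them as $s=g(w)$ with $g$ holomorphic and $g\in\mathfrak m_w^3$. Equivalently, the ideal contains $s-g(w)$. Substituting therefore does not change the local algebra: $\mathcal O_{M,p_j}/I\cong\C\{w\}/I'$, where
\[
 I'=\bigl(w_i^2\widetilde c_i(w)-h(w)\bigr)_{i\ne j},
 \qquad
 \widetilde c_i(w):=c_i(g(w)),
 \qquad
 h:=g\cdot g\in\mathfrak m_w^6 .
\]
Each $\widetilde c_i$ is a unit, so $I'$ is also generated by $w_i^2-h\,\widetilde c_i^{-1}$.

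\emph{Multiplicity.} These are $N-1$ germs in $N-1$ variables. Their lowest-degree homogeneous parts are exactly $w_i^2$, because the perturbation terms lie in $\mathfrak m^6$. The leading forms $w_i^2$ have the origin as their only common zero. By the standard fact that such a system is a complete intersection at an isolated point whose multiplicity is the product of the degrees of the leading forms, we get that $p_j$ is isolated with $\dim_\C\C\{w\}/I'=2^{N-1}$.

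For a more self-contained alternative we would deform: consider $w_i^2-\varepsilon h\widetilde c_i^{-1}$ for $\varepsilon\in[0,1]$. On a small sphere $|w|=\delta$ one has $\max_i|w_i|^2\ge\delta^2/(N-1)$, while the perturbation is $O(\delta^6)$. So no zeros cross the boundary, and conservation of number (as in Lemma~\ref{lem:global-bezout}) gives multiplicity equal to that of $(w_i^2)$, namely $2^{N-1}$.

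\emph{Main obstacle.} The main step is justifying that the ideal generated by the original $D_i$ and $F_\alpha$ coincides locally with the one after elimination. This needs the span argument for the $D_i$ and the fact that eliminating variables along a graph $s=g(w)$ preserves the local algebra, and hence the intersection multiplicity on $M$. We would state this carefully, since the local multiplicity used in Lemma~\ref{lem:global-bezout} is the length of $\mathcal O_{M,p_j}/I$.
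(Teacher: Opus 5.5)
Your proof is correct and takes essentially the same route as the paper's. Both use the chart $X_0=1$, $y_j=1$, replace the $D_i$ by differences with the $j$th distance equation, eliminate the position variable by the implicit function theorem (using $q_j\ne0$), and deform the reduced system by conservation of number to the diagonal system $w_i^2=0$ of multiplicity $2^{N-1}$. The differences are cosmetic: you phrase the elimination as an isomorphism of local algebras and divide out the units $\widetilde c_i$, and you deform the higher-order terms linearly, whereas the paper uses the rescaling homotopy $t^{-2}h(tu)$.
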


\begin{proof}
Work in the charts $X_0=1$ and $y_j=1$, and write
\[
 w=x-\mathbf a_j,
 \qquad
 u_k=y_k\quad(k\ne j),
 \qquad
 \mathbf b_k=\mathbf a_j-\mathbf a_k.
\]
Instead of comparing every distance with the $N$th one, compare it with
the $j$th one.  This is an invertible linear combination of the $D_i$
and gives the equivalent equations
\begin{equation}\label{eq:Delta-k}
 \Delta_k:=u_k^2(w+\mathbf b_k)\cdot(w+\mathbf b_k)-w\cdot w=0,
 \qquad k\ne j.
\end{equation}
The field equations become
\[
 G(w,u):=q_jw+
 \sum_{k\ne j}q_ku_k^3(w+\mathbf b_k)=0.
\]
Their derivative with respect to $w$ at the origin is $q_jI_3$, so the
holomorphic implicit-function theorem solves them uniquely as
\[
 w=\phi(u):=
 -\frac{\sum_{k\ne j}q_ku_k^3\mathbf b_k}
 {q_j+\sum_{k\ne j}q_ku_k^3}
 =O(|u|^3).
\]
Substituting in~\eqref{eq:Delta-k} gives
\[
 h_k(u):=\Delta_k(\phi(u),u)
 =|\mathbf b_k|^2u_k^2+O(|u|^5),
 \qquad k\ne j.
\]
Here $|\mathbf b_k|^2>0$ because the sites are distinct.

For $0<t\le1$, set the homotopy
\[
 h^{(t)}(u):=t^{-2}h(tu),
\]
and extend continuously to
\[
 h^{(0)}(u)=\bigl(|\mathbf b_k|^2u_k^2\bigr)_{k\ne j}.
\]
For small $|u|$, uniformly in $0\le t\le1$,
\[
 \|h^{(t)}(u)-h^{(0)}(u)\|\le Ct^3|u|^5,
 \qquad
 \|h^{(0)}(u)\|\ge c|u|^2.
\]
Thus the homotopy has no zero on a sufficiently small sphere, and
conservation of number shows that $h$ and $h^{(0)}$ have the same local
multiplicity.  The diagonal system $u_k^2=0$ has multiplicity $2^{N-1}$:
a small perturbation gives two roots in each coordinate, independently.
The map $(w,u)\mapsto(G(w,u),u)$ is locally biholomorphic, so eliminating
$w$ does not change the multiplicity of the full system.
\end{proof}

The second family lies over the distinguished point at infinity.  Let
\[
 v_\infty:=[0:0:1]\in Y
\]
be the distinguished point at infinity from
Section~\ref{sec:compactification}.  At $v_\infty$ one has
$L_i(v_\infty)=1$, while
all the field equations vanish.  The distance equations reduce to
$y_i^2=y_N^2$, and hence give exactly the $2^{N-1}$ points
\begin{equation}\label{eq:infinity-roots}
 p_\varepsilon:=
 \bigl(v_\infty,[\varepsilon_1:\cdots:\varepsilon_{N-1}:1]\bigr),
 \qquad
 \varepsilon_i\in\{\pm1\}.
\end{equation}
Some may be degenerate.  To make them simple, change only the field
equations by replacing the coefficient $q_N$ there with $q_N+\delta$:
\begin{equation}\label{eq:structured-perturbation}
 F_{\alpha,\delta}:=
 F_\alpha+\delta y_N^3(X_\alpha-a_{N\alpha}X_0),
 \qquad 1\le\alpha\le3.
\end{equation}
This is still a system of the same bidegrees; denote it by $s_\delta$.

Use the chart $X_4=1$, where the quadric is parametrized by
$X=z$ and $X_0=z\cdot z$, and set $y_N=1$.  With the variables ordered as
$(y,z)$, the Jacobian at $p_\varepsilon$ is block upper triangular.  Its
diagonal blocks are
\[
 2\operatorname{diag}(\varepsilon_1,\ldots,\varepsilon_{N-1})
 \quad\text{and}\quad
 (C_\varepsilon+\delta)I_3,
 \qquad
 C_\varepsilon:=q_N+\sum_{i<N}q_i\varepsilon_i.
\]
Consequently,
\[
 \det ds_\delta(p_\varepsilon)
 =2^{N-1}(\varepsilon_1\cdots\varepsilon_{N-1})
  (C_\varepsilon+\delta)^3.
\]
Choose an arbitrarily small $\delta\in\C$ such that
\[
 \delta\ne-C_\varepsilon\quad\text{for every }\varepsilon,
 \qquad
 \delta\ne-q_N.
\]
Then all the points~\eqref{eq:infinity-roots} are simple zeros of
$s_\delta$.

The same perturbation preserves every charge-site zero and its
multiplicity.  In the coordinates used in
Proposition~\ref{prop:site-multiplicity}, it only replaces $q_N$ by
$q_N+\delta$ in the field equation.  If $j<N$, the derivative with respect
to $w$ at the origin remains $q_jI_3$; if $j=N$, it becomes
$(q_N+\delta)I_3$.  In both cases it is invertible, the solution still
satisfies $\phi_\delta(u)=O(|u|^3)$, and the reduced distance equations
remain
\[
 |\mathbf b_k|^2u_k^2+O(|u|^5).
\]
The homotopy in the proof of Proposition~\ref{prop:site-multiplicity} is
uniform for small $\delta$, so every $p_j$ still contributes $2^{N-1}$.

These preparations allow us to obtain the final count. The following proposition then completes the proof of Theorem~\ref{thm:main}.

\begin{proposition}\label{prop:quantitative-bound}
The cardinality of the nonexceptional equilibrium set is bounded from above as
\begin{equation}\label{eq:final-bound}
 \#Z\le
 2\sum_{k=0}^{\min\{3,N-1\}}
 3^k2^{N-1-k}\binom{3}{k}\binom{N-1}{k}
 -(N+1)2^{N-1}
 =2^{N-4}(N-1)(9N^2+9N+10).
\end{equation}
\end{proposition}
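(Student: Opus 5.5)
The plan is a conservation-of-number count for the structured system $s_\delta$ of \eqref{eq:structured-perturbation}, followed by a limit $\delta\to0$. Every zero of the original system $s_0$ will be accounted for in one of three groups: the physical lifts $\lambda(\mathbf x)$, $\mathbf x\in Z$; the site zeros $p_1,\dots,p_N$; and the $2^{N-1}$ points $p_\varepsilon$ over $v_\infty$. These groups are pairwise disjoint. Physical lifts have $X_0\ne0$ and all $y_i\ne0$. Each $p_j$ has $y=[e_j]$, which has $N-1\ge1$ vanishing coordinates. Each $p_\varepsilon$ has $X_0=0$. By Proposition~\ref{prop:global-rigidity}, $Z$ is finite, so only finitely many zeros are under consideration.

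\textbf{Step 1.} I would fix pairwise disjoint small isolating neighborhoods. Put $U_{\mathbf x}$ around each $\lambda(\mathbf x)$, which is possible by Proposition~\ref{prop:complex-isolation}. Put $U_j$ around each $p_j$, using Proposition~\ref{prop:site-multiplicity}. Put $U_\varepsilon$ around each $p_\varepsilon$.

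To see that each $p_\varepsilon$ is isolated for $s_0$, I would redo the chart computation at $X_4=1$ with $\delta=0$. The distance equations have invertible Jacobian $2\operatorname{diag}(\varepsilon_i)$ in the $y$ variables, so they solve $y$ as a function of $z$. It remains to show that the field equations, restricted to this graph, have an isolated zero at $z=0$. They are homogeneous of degree one in $(X-\mathbf a X_0)$, and $X_0=z\cdot z$ is of order two, so their leading part is $C_\varepsilon z+O(|z|^2)$. If $C_\varepsilon\ne0$ the zero is isolated. If $C_\varepsilon=0$ this needs a finer argument, and here I would avoid the issue rather than resolve it, as explained in Step~3.

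\textbf{Step 2.} For small $\delta$, the system $s_\delta$ has the following zeros inside the chosen neighborhoods:
\begin{itemize}
\item at least one zero in each $U_{\mathbf x}$, since the local multiplicity is positive and is conserved under small perturbation (Lemma~\ref{lem:global-bezout});
\item exactly $2^{N-1}$ zeros, counted with multiplicity, in each $U_j$, by the remark preceding the proposition;
\item the simple zeros $p_\varepsilon$ themselves, which remain exact zeros of $s_\delta$ for every $\delta$ and are simple for the chosen $\delta$.
\end{itemize}

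\textbf{Step 3.} This is where the difficulty at $p_\varepsilon$ is sidestepped. It is cleaner to work entirely with $s_\delta$ for a fixed admissible small $\delta$, and to apply the generic-perturbation argument of Lemma~\ref{lem:global-bezout} to $s_\delta$ rather than to $s_0$.

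Let $U=\bigcup_{\mathbf x}U_{\mathbf x}\cup\bigcup_j U_j$. The neighborhoods $U_{\mathbf x}$ isolate zeros of $s_0$ and contain none on their boundaries, so for $\delta$ small $s_\delta$ also has no zeros on $\partial U$. Now perturb $s_\delta$ generically. Inside $U$, the simple zeros of $s_\delta$ persist and the multiplicities are conserved, giving at least $\#Z+N2^{N-1}$ zeros. Near the points $p_\varepsilon$, which are simple zeros of $s_\delta$, we get $2^{N-1}$ additional simple zeros. All of these are distinct points. The total number of zeros of the generic system is $\alpha_N$, hence
\[
\#Z+N2^{N-1}+2^{N-1}\le\alpha_N .
\]

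\textbf{Step 4.} It remains to check the closed form. This is the identity
\[
2^{N-4}(9N^3+9N-2)-(N+1)2^{N-1}=2^{N-4}\bigl(9N^3+9N-2-8N-8\bigr)=2^{N-4}(9N^3+N-10),
\]
and $9N^3+N-10=(N-1)(9N^2+9N+10)$.

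\textbf{Main obstacle.} The delicate point is the neighborhood bookkeeping in Step~3. One must make sure that the neighborhoods $U_{\mathbf x}$ and $U_j$, which were chosen for $s_0$, remain isolating with unchanged total multiplicity when passing to $s_\delta$. One must also make sure that the $p_\varepsilon$ do not lie in $U$. The latter holds automatically, since $p_\varepsilon$ has $X_0=0$ while $\lambda(\mathbf x)$ and $p_j$ have $X_0=1$, so taking the neighborhoods small keeps them apart. For the former, I would choose $U$ first, and only then choose $\delta$ small enough that $s_\delta$ does not vanish on $\partial U$.
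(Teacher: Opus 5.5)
Your proposal is correct and follows essentially the same route as the paper: isolate the physical lifts and the site zeros for $s_0$, pass to $s_\delta$ with $\delta$ small enough that nothing crosses those boundaries, add neighborhoods of the now-simple zeros $p_\varepsilon$, perturb generically, and compare with $\alpha_N$. One small correction is that your opening claim, that every zero of $s_0$ lies in one of the three groups, is false in general (other, possibly positive-dimensional, components such as zeros with $S=0$ can exist), and so is your Step~1 discussion of isolating $p_\varepsilon$ for $s_0$ unnecessary; neither is used, since the argument only needs a lower bound on the zeros inside the chosen neighborhoods.
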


\begin{proof}
By Proposition~\ref{prop:global-rigidity}, the set $Z$ is finite.
By Proposition~\ref{prop:complex-isolation}, each physical lift
$\lambda(\mathbf x)$ is an isolated complex zero of the original system;
let its local multiplicity be $\mu_{\mathbf x}\ge1$.  Choose pairwise
disjoint small neighborhoods of all physical lifts and all charge-site
zeros, with no zero on their boundaries.

For sufficiently small admissible $\delta$, the perturbation
\eqref{eq:structured-perturbation} remains nonzero on all these boundaries.
Conservation of number therefore keeps total multiplicity
$\mu_{\mathbf x}$ in each physical neighborhood and $2^{N-1}$ in each site
neighborhood.  Add small disjoint neighborhoods of the $2^{N-1}$ simple
zeros at infinity.  Finally, perturb $s_\delta$ arbitrarily slightly in the
full vector spaces of sections of bidegrees $(1,2)$ and $(1,3)$.  No zero
crosses any chosen boundary, so the perturbed system contains at least
\[
 \sum_{\mathbf x\in Z}\mu_{\mathbf x}
 +N2^{N-1}+2^{N-1}
 \ge \#Z+(N+1)2^{N-1}
\]
simple zeros in these neighborhoods.  Lemma~\ref{lem:global-bezout} gives
\[
 \alpha_N\ge \#Z+(N+1)2^{N-1}.
\]
Combining this with~\eqref{eq:TN} proves~\eqref{eq:final-bound}.  It is clear that other
components of the zero set defined by Equations~\eqref{eq:projective-D} and~\eqref{eq:projective-F}, away from the selected neighborhoods,
do not affect the argument.
\end{proof}

\section*{Acknowledgements}

This work has received funding from the European Research Council (ERC)
under the European Union's Horizon 2020 research and innovation programme
through grant agreement 862342 (A.E.).  The authors are also partially
supported by the grant PID2022-
136795NB-I00 of the Spanish Science Agency and
the ICMAT--Severo Ochoa grant CEX2023-001347-S.

\appendix

\section{Mixed-sign fields with curves of zeros}
\label{sec:mixed-sign}

The following elementary examples are well known, and we only include them
to show that the restriction to the nonexceptional set $S\ne0$, or to
charges of one sign, is necessary.

First, place four charges at the vertices of a square, with
\[
 \mathbf a_1=(1,1,0),\quad
 \mathbf a_2=(-1,1,0),\quad
 \mathbf a_3=(-1,-1,0),\quad
 \mathbf a_4=(1,-1,0),
\]
and
\[
 q_1=q_3=1,
 \qquad
 q_2=q_4=-1.
\]
Then both the electric field $E$ and the function $S$ vanish identically on the line
\[
 \{\mathbf x=(0,0,t):t\in\R\}.
\]

Likewise, place three charges on a line, with
\[
 \mathbf a_1=(0,0,-4),\quad
 \mathbf a_2=(0,0,0),\quad
 \mathbf a_3=(0,0,4),
\]
and
\[
 q_1=q_3=1,
 \qquad
 q_2=-\frac{54}{125}.
\]
Then both $E$ and $S$ vanish identically on the circle
\[
 \{\mathbf x=(3\cos t,3\sin t,0):t\in\R\}.
\]

\end{document}